\newtheorem{exercise}{Exercise}
\newtheorem{exampl}{Example}
\newtheorem{theorem}{Theorem}[section]
\newtheorem{corollary}{Corollary}[theorem]
\newtheorem{lemma}[theorem]{Lemma}
\def\bq{\begin{quotation}}
\def\eq{\end{quotation}}
\definecolor{darkgreen}{rgb}{0.2,0.5,0.2}
\def\o{\omega}
\newcommand{\V}[1]{ \mathbf{#1} }    
\newcommand{\Vo}{\V{0}}
\newcommand{\Ve}{\V{e}}
\newcommand{\Vm}{\V{m}}
\newcommand{\Vp}{\V{p}}
\newcommand{\Vr}{\V{r}}
\newcommand{\Vu}{\V{u}}
\newcommand{\Vx}{\V{x}}
\newcommand{\Vy}{\V{y}}
\newcommand{\M}[1]{ \mathbf{#1} }  
\newcommand{\MA}{\M{A}}
\newcommand{\MB}{\M{B}}
\newcommand{\MC}{\M{C}}
\newcommand{\MD}{\M{D}}
\newcommand{\ME}{\M{E}}
\newcommand{\MF}{\M{F}}
\newcommand{\MG}{\M{G}}
\newcommand{\MI}{\M{I}}
\newcommand{\MJ}{\M{J}}
\newcommand{\ML}{\M{L}}
\newcommand{\MR}{\M{R}}
\newcommand{\MU}{\M{U}}
\newcommand{\MV}{\M{V}}
\newcommand{\MW}{\M{W}}
\newcommand{\MY}{\M{Y}}
\newcommand{\MZ}{\M{Z}}
\newcommand{\Rn}[1]{\mathbb{R}^#1}
\newcommand{\Rmn}[2]{\mathbb{R}^{#1 \times #2}}
\newcommand{\Cmn}[2]{\mathbb{C}^{#1 \times #2}}
\def\2nm#1{\|#1\|_2}
\def\Ra#1{\mathrm{Range}(#1)}
\newcommand{\ars}[1]{\left[ \begin{array}{#1}}
\newcommand{\are}{\end{array} \right] }
\newcommand{\oars}[1]{\begin{array}{#1}}
\newcommand{\oare}{\end{array}}
\newcommand{\rars}[1]{\left( \begin{array}{#1}}
\newcommand{\rare}{\end{array} \right) }
\newcommand{\eqs}{\begin{eqnarray}}
\newcommand{\eqe}{\end{eqnarray}}
\newcommand{\eqsn}{\begin{eqnarray*}}
\newcommand{\eqen}{\end{eqnarray*}}
\def\defs{\begin{definition}}
\def\defe{\end{definition}}
\def\teos{\begin{theorem}}
\def\teoe{\end{theorem}}
\def\prfs{\begin{proof}}
\def\prfe{\end{proof}}
\def\exas{\begin{exampl}}
\def\exae{\end{exampl}}
\def\excs{\begin{exercise}}
\def\exce{\end{exercise}}
\def\cors{\begin{corollary}}
\def\core{\end{corollary}}
\newcommand{\ens}{\begin{enumerate}}
\newcommand{\ene}{\end{enumerate}}
\newcommand{\its}{\begin{itemize}}
\newcommand{\ite}{\end{itemize}}
\newcommand{\des}{\begin{description}}
\newcommand{\dee}{\end{description}}
\def\wh{\widehat}
\def\wt{\widetilde}
\newcommand{\mb}{\mathbf}
\newtheorem{remark}{Remark}
\def\o{\omega}
\newcommand{\bfsfp}{\mbox{\boldmath$\mathsf{p}$} }
\newcommand{\rdir}{\Vr}  
\newcommand{\ldir}{\pmb{\ell}} 
\newcommand{\Rdir}{\MR}  
\newcommand{\Ldir}{\ML} 
\newcommand{\MVrand}{\widetilde{\MV} } 
\newcommand{\MWrand}{\widetilde{\MW}}  
\newcommand{\Psirand}{\widetilde{\pmb{\Psi}}}  
\newcommand{\obs}{\wh{\Vm}}
\newcommand{\detsol}{\wh{\MZ}}
\newcommand{\randsrcsol}{\wt{\MY}}
\newcommand{\randdetsol}{\wt{\MZ}}
\newcommand{\MAsym}{\wt{\MA}}  
\newcommand{\MBt}{\wt{\MB}}  
\newcommand{\MDeltasym}{\wt{\Delta \MA}^{(i)}} 
\newcommand{\MUtil}{\wt{\MU}} 
\newcommand{\MYtil}{ ( \wt{\MU}^{(i)} )^T } 
\newcommand{\EigvecsUnit}{\wt{\pmb{\Phi}}} 
\newcommand{\eigvecUnitD}{\wt{\phi}_{i_x i_y}^{k_x k_y}}
\title{Randomization for the Efficient Computation of 
Parametric Reduced Order Models for Inversion}
\author{
 Selin Aslan \\
  Advanced Photon Source\\
  Argonne National Laboratory\\
  Lemont, IL 60439\\
  \texttt{saslan@anl.gov} \\
   \And 
Eric de Sturler \\
  Department of Mathematics and \\
  Computational Modeling and Data Analytics Division\\
  Academy of Integrated Science\\
  Virginia Tech, Blacksburg, VA 24061\\
  \texttt{sturler@vt.edu} \\
   \AND
   Serkan Gugercin \\
  Department of Mathematics and \\
  Computational Modeling and Data Analytics Division\\
  Academy of Integrated Science\\
  Virginia Tech, Blacksburg, VA 2406\\
   \texttt{gugercin@vt.edu} \\
}
\begin{document}
\maketitle

\begin{abstract}
Nonlinear parametric inverse problems appear in many applications. In this work, we focus on 
diffuse optical tomography (DOT) in medical image
reconstruction, in which we aim to recover an unknown image of interest (defined by a set of parameters), such as cancerous
tissue in a given medium, using a mathematical (forward) model. The forward model in DOT is
a diffusion-absorption model for the photon flux. The main computational bottleneck in such inverse
problems is the repeated evaluation of the large-scale forward model. For DOT, this
corresponds to solving large linear systems for each source and frequency at each optimization
step. 
 
In addition, as Newton-type methods are the method of choice for these problems, one needs to solve additional
 linear systems with the adjoint to
 efficiently compute derivative information.
As rapid advances in technology allow for large
numbers of sources and detectors, these problems become computationally prohibitively
expensive. In the past, the use of reduced order models (ROM) has been proposed to drastically
reduce the size of the linear systems solved in each optimization step, while still solving the
inverse problem accurately. However, as the number of sources and detectors increases, even
the construction of the ROM bases  incurs a substantial cost.
Interpolatory model reduction, when performed in the setting of full matrix-interpolation to match the full parameter gradient matrix, requires the solution of large linear systems for all sources and
frequencies as well as for all detectors and frequencies for each interpolation point 
in parameter space to build a candidate basis for the ROM projection space. As this candidate
basis numerically has low rank, this construction is followed by 
a rank-revealing 
factorization that typically reduces the number of vectors in
the candidate basis substantially. 
Since only the ROM projection space matters, not its basis, 
we propose to use randomization to approximate
this basis efficiently and drastically reduce 
the number of large linear solves for constructing the global ROM basis for DOT. 
We also provide a detailed analysis for the low-rank structure of the candidate basis for our 
problem of interest. Even though we focus on the DOT problem here,
the ideas presented are relevant to
many other large scale inverse problems and optimization problems.
\end{abstract}

\keywords{randomization \and model reduction \and inverse problems \and DOT \and PaLS \and interpolation \and transfer function}

\section{Introduction}\label{sec:intro}
Nonlinear inverse problems appear in many applications for identification and localization of anomalous regions, such as finding tumors in the body, luggage screening, and finding contaminant pools in the earth. In these applications, we aim to recover an unknown image of a region of interest in a given medium using a mathematical model (the forward model) combined with measurements
\cite{hansen1998rank,Epstein2008,Tarantola2005}.

In DOT, the inverse problem we focus on in this paper, the forward model is described by a large-scale discretized partial differential equation (PDE), namely a diffusion-absorption model for the photon flux. The main computational bottleneck is the repeated evaluation of this forward model and its derivatives to recover the desired image. The number of sources and detectors may be a thousand or more in 3D, combined with multiple frequencies, and
the resulting computational cost can be prohibitively expensive. Hence, we need new computational techniques
that characterize the medium quickly and efficiently for such inverse problems.

As shown in \cite{StuGuKilChatBeatOCon}, the cost function and gradient evaluation in DOT correspond to evaluating the frequency response (transfer) function of a linear dynamical system at prespecified frequencies. Therefore, the problem is perfectly suited for interpolation-based model reduction. This framework, i.e.,
the use of interpolatory parametric reduced models as surrogates for the full forward model to reduce the size of the linear systems solved in each optimization step, 
has been considered in DOT while approximating both the cost functional and the Jacobian accurately \cite{StuGuKilChatBeatOCon}. This approach significantly reduces the cost of the inversion by drastically reducing the computational cost of solving the forward problems. However, to compute the model reduction bases used in the ROM, many large linear systems still need to be solved, followed by a rank-revealing factorization.

This rank-revealing factorization in building the ROM basis reveals that the interpolatory method for ROM construction to guarantee full cost function and gradient interpolation
solves many more linear systems than needed \cite{StuGuKilChatBeatOCon,OConKilStuGug}. In other words, using ROMs reduces the size of the linear systems solved in each step of the optimization, but it still requires the solution of many large systems to build a ROM basis that enforces full matrix interpolation. In \cite{AslanStuKilmer}, we show that randomization in DOT can reduce the number of large linear system solves. Hence, in this paper, we combine these two approaches to obtain an effective and computationally highly efficient approach for nonlinear parametric inversion. We employ randomization to capture essentially the same ROM space  as the standard approach at much lower cost via sampling.

In Section \ref{Sec:RandROMBackground}, we briefly review the DOT problem. In Section \ref{Sec:ROM}, we give the system-theoretic interpretation and notation for DOT and a brief overview of interpolatory model reduction. In Section \ref{Sec:RandROM}, we introduce a new randomized approach to generate ROMs efficiently. This approach is based on the fact that a spanning set for the range of a low rank matrix can be computed efficiently by random sampling \cite{HalMartTropp}. In Section \ref{sec:TheorJust}, we provide a theoretical justification for exploiting low rank structure in the reduction basis, and we connect our approach, using randomization to compute the interpolatory model reduction bases, to tangential interpolation. Numerical results are given in Section \ref{Sec:NumExp} for 3D problems. 
For large 3D problems, iterative solvers are needed to generate the candidate ROM basis, and for that reason we
use the number of linear systems to be solved as a proxy for computational cost and we consider each right hand side as a separate system. Conclusions and future work are outlined in Section \ref{sec:conc}.

Before we continue, we point out that different model reduction techniques have been employed in a wide variety of inverse problems; see, e.g., \cite{Druskin2011solution,Galbally2009, lieberman2013hessian, Lieberman2010, Wang:2005b, borcea2012model,cstefuanescu2016model,cui2015data,liu2005inverse,manzoni2014reduced,benner2014model} and the references therein. However, as we explain below, interpolatory model reduction methods \cite{AntBG20,Ant2010imr,BeattGug_bookCh7_RatInt} are perfectly suited for the DOT problem and have proven very effective in this setting \cite{StuGuKilChatBeatOCon,OConKilStuGug}. Hence, our focus here is on interpolatory approaches for DOT. Our goal is not to compare different model reduction techniques for inverse problems in general.

\section{Diffuse Optical Tomography}\label{Sec:RandROMBackground} DOT is a non-invasive, low cost alternative for breast and brain imaging compared with X-Ray and MRI. In DOT, near infra-red light from an array of sources is transmitted through the medium and measured with an array of detectors. Here, we assume that the diffusion coefficient is known, and  we use measurements and the forward model to recover the absorption coefficient of the medium. The absorption coefficient can be used to distinguish healthy tissue from tumors \cite{arridge}.

We consider the diffusion model, posed in the frequency domain, for the photon flux $\phi(\mb{x})$ driven by an input source $g(\mb{x})$. The forward model for DOT is given by
\begin{align}\label{Eq: PDE2}
&-\nabla \cdot (D(\Vx)  \nabla\phi(\Vx))+ \mu(\Vx)\phi(\Vx)+ \frac{ \imath\omega}{\nu}\phi(\Vx)= g(\mb{x}), &\\  \nonumber
&\text{for} \quad \Vx=(x_1,x_2,x_3)^T \;\text{and}\;  -a<x_1<a,\:\:\:-b<x_2<b, \:\:\:0<x_3<c, &\\  \nonumber
&\phi(\Vx)=0\:\:   \text{if}\:\: 0\leq x_3 \leq c\:\:\: \text{and} \:\:\: \text{either}\:\:\: x_1=\pm a, \:\:\:\text{or}\:\:\:\: x_2=\pm b, &\\  \nonumber
&0.25\phi(\Vx)+\frac{D(\Vx)}{2}\frac{\partial \phi(\Vx)}{\partial\xi}= 0  \:\:  \text{for}\:\:\:x_3=0,\:\:or\:\: x_3=c,\nonumber
\end{align}
where $D(\mb{x})$ is the diffusion coefficient, $\mu(\mb{x})$ the absorption coefficient, $\omega$ is the frequency modulation of light, $\nu$ is the speed of light in the medium,
and $\xi$ is the outward unit normal on the boundary \cite{arridge}.

The Parametric Level Set (PaLS) approach \cite{Aghasi_etal11} has been used  to reconstruct complex geometries, and provide regularization to compensate for the influence of noise and ill-posedness of DOT \cite{Aghasi_etal11, StuGuKilChatBeatOCon, AslanStuKilmer}. We parameterize the absorption field  $\mu(\mb{x}; \Vp)$ using PaLS and reduce the dimension of the parameter space. Hence, we only solve for a modest number of parameters, $\Vp \in \Rn{{n_p}}$ that describe the shape of potential anomalies (tumors), rather than solving for absorption at every grid point.

Let $n_s$ be the number of sources on the top, $n_d$ be the number of detectors on the bottom, and $n_\o$ be the number of frequencies to generate the DOT data. The discretization of (\ref{Eq: PDE2}) can be done by finite element or finite difference techniques. Let the rows of $\MC \in  \Rmn{n_d}{n}$ correspond to the detectors, the columns of $\MB \in \Rmn{n}{n_s}$ correspond to the sources, and let $\obs(\omega;\Vp) $ be the vector of detector outputs. After discretization of (\ref{Eq: PDE2}), measurements at the detector locations satisfy
\begin{equation} \label{Eq:FOMtransferFun}
  \obs(\omega;\Vp) =\pmb{\Psi}\!\left(\omega;\Vp \right)\,\hat{\Vu}(\omega)\quad \mbox{where}\quad
   \pmb{\Psi}(\omega;\Vp)=\MC \left(\frac{\imath\omega}{\nu}\,\ME\, +\MA(\Vp)\right)^{-1}\MB,
\end{equation}
$\MA(\Vp) \in \Rmn{n}{n}$ results from a finite difference discretization of the diffusion and absorption terms, and $\ME \in  \Rmn{n}{n}$ is the identity except for zero rows corresponding to points on the boundaries $x_3=0$ and $x_3=c$ in (\ref{Eq: PDE2}). Thus, $\ME$ is singular. For a given frequency $\o$ and parameter vector $\Vp$, $ \pmb{\Psi}(\omega;\Vp)$ gives the map from inputs $\hat{\Vu}(\o)$ to outputs $\obs(\omega;\Vp)$; this is known as the transfer (frequency response) function. Combining all the predicted observation vectors, we obtain the complex matrix
\begin{equation}
\mathbb{M}(\Vp) = [\obs_1(\omega_1;\Vp),\, \ldots,\,\obs_{n_s}(\o_1;\Vp),\,\obs_1(\o_2;\Vp), \ldots,
\obs_{n_s}(\omega_{n_\omega};\Vp) ]
\end{equation}
of dimension $n_d \times (n_s \cdot n_\o)$. Given the empirical data $\mathbb{D}$, we solve the nonlinear least squares problem
\begin{equation}\label{Eq:NonlinLS}
  \wh{\Vp} := \arg \: \min\limits_{\Vp} \|\mathbb{M}(\Vp) - \mathbb{D} \|_{F}^2.
\end{equation}
The solution $\wh{\Vp}$ identifies the absorption field and thus the anomaly. The PaLS representation regularizes the problem, so no further regularization is needed \cite{Aghasi_etal11}. Evaluating the objective function at $\Vp$ requires solving the systems
\begin{equation}\label{Eq:Srcsols}
\left(\frac{\imath\o_j}{\nu}\,\ME\, +\MA(\Vp)\right)\wh{\MY}=\MB,
\end{equation}
for each frequency, leading to $n_s\cdot n_\omega$ large linear systems.
For solving the minimization problem (\ref{Eq:NonlinLS}), we use the Trust region algorithm with REGularized model Solution (TREGS) \cite{StuKil11c} that has been proven very effective for problems of this type.
For Newton-type algorithms, it is also necessary to construct the Jacobian of $\pmb{\Psi}(\Vp)$,
\begin{equation} \label{eq:jacob}
\MJ(\Vp)=-\MC\left( \frac{\imath \o_j}{\nu}\ME +\MA(\Vp) \right)^{-1}\frac{\partial}{\partial \Vp}\MA(\Vp)\left(\frac{\imath \o_j}{\nu}\ME +\MA(\Vp) \right)^{-1}\MB,
\end{equation}
where $\frac{\partial}{\partial \Vp_k}\MA(\Vp)$ is diagonal and inexpensive to compute. Evaluating $\MJ$ using the co-state approach~\cite{Vogel2002} requires solving the systems
\begin{equation}\label{Eq:Detsols}
\left(\frac{\imath\o_j}{\nu}\,\ME\, +\MA(\Vp)\right)^T \detsol=\MC^T,
\end{equation}
for each frequency, leading to an additional $n_d\cdot n_\omega$ adjoint systems for the detectors. As a result, standard optimization approaches to solve this inverse problem require $O(10^3-10^4)$ large linear system solves at each optimization step. The size of a realistic linear system is at least $O(10^6)$. Repeated evaluations of (\ref{Eq:Srcsols}) and (\ref{Eq:Detsols}) lead to a computational bottleneck for solving the inverse problem. We will resolve this issue by incorporating interpolatory model reduction into the process.

\section{Interpolatory Model Reduction}\label{Sec:ROM} Projection-based parametric model reduction is widely used to replace a parametric dynamical system of high dimension, $n$, by one of much lower dimension, $r \ll n$, to allow rapid yet accurate simulation over the range of parameters \cite{BeattGug_bookCh7_RatInt,BGW2015,BenCOW17,hesthaven2016certified, AntBG20,QuaMN16}. To briefly explain the construction of ROM, we consider the time domain representation of the frequency domain equation (\ref{Eq:FOMtransferFun}) given by
\begin{equation}\label{Eq:ModelDynSys}
  \frac{1}{\nu}\ME\,  \dot{\Vy}(t;\Vp)  =-\MA(\Vp)\Vy(t;\Vp) +
	\MB\Vu(t)\quad\mbox{with}\quad
	\Vm(t;\bfsfp) =\MC\Vy(t;\Vp).
\end{equation}
We seek to replace this high dimensional dynamical system by a reduced order model
\begin{equation}\label{Eq:ROMDynSys}
  \frac{1}{\nu}\ME_r\,  \dot{\Vy_r}(t;\Vp)  =-\MA_r(\Vp)\Vy_r(t;\Vp) +
	\MB_r\Vu(t)\quad\mbox{with}\quad
	\Vm_r(t;\bfsfp) =\MC_r\Vy_r(t;\Vp),
\end{equation}
with the associated reduced transfer function
\begin{equation}\label{Eq:TransferFunc}
\pmb{\Psi}_r(\omega;\Vp)  =  \MC_r\left( \frac{\imath \o}{\nu}\ME_r +\MA_r(\Vp)\right)^{-1} \MB_r \in {\mathbb{C}^{n_d \times n_s}},
\end{equation}
where
\begin{align}\label{Eq:ROMeq}
\ME_r= \MW_r^T \ME \MV_r,  \:\:\:\:  \MB_r =\MW_r^T\MB, \:\:\:\: \MC_r = \MC \MV_r, \:\:\:\: \MA_r(\Vp) = \MW_r^T \MA(\Vp) \MV_r,
\end{align}
and $\MV_r \in\Cmn{n}{r}$ and $\MW_r \in\Cmn{n}{r}$ are the computed projection basis matrices. To obtain a high-fidelity approximation to $\pmb{\Psi}(\omega;\Vp)$ over all parameters and frequencies
of interest, we need to construct appropriate matrices $\MW_r$ and $\MV_r$.
Several parametric model reduction methods exist to select $\MW_r$ and $\MV_r$; see, e.g., \cite{BGW2015,benner2017model,BenCOW17,hesthaven2016certified,BuiThanh2008, BuiThanh2008_AIAA,feng2019new,feng2020error,beattie2019sampling,chen2019robust} and the references therein.
In DOT, the function and Jacobian evaluations correspond to evaluating, respectively, the transfer function $\pmb{\Psi}(\omega_j;\Vp)$
and its derivatives at parameter points that arise in the minimization (\ref{Eq:NonlinLS}).
Therefore, a natural choice is to use interpolatory model reduction to construct $\MW_r$ and $\MV_r$ \cite{bond2005pmo,BauBBG09,gunupudi2003ppt,FenB08,AntBG20}.
We follow well-known results regarding interpolatory parametric model reduction in \cite[Theorem 4.1-4.2]{BauBBG09} to generate the ROM using Algorithm \ref{Alg:ROMallsrcdet} (below). The ROM and its derivatives match the full order model and its derivatives {\em exactly} at the interpolation points in frequency and parameter space, $(\o_j, \pmb{\pi}_i)$ for $j=1,\cdots, n_\o$ and $i=1,\cdots, n_k$, that is,
\begin{align}
  \pmb{\Psi}(\omega_j;\pmb{\pi}_i)&=\pmb{\Psi}_r(\omega_j;\pmb{\pi}_i), \nonumber \\
  \nabla_{\Vp}\pmb{\Psi}(\omega_j;\pmb{\pi}_i)&=\nabla_{\Vp}\pmb{\Psi}_r(\omega_j;\pmb{\pi}_i,\label{Eq:ROMhermite} ~\mbox{and}\\
  \pmb{\Psi}^\prime(\omega_j;\pmb{\pi}_i)&=\pmb{\Psi}_r^\prime(\omega_j;\pmb{\pi}_i),\nonumber
\end{align}
where $'$ denotes the derivative with respect to $\omega$.
It follows from the definition of the Jacobian $\MJ(\Vp)$
defined  in \eqref{eq:jacob} that matching the parameter gradient of the transfer function, i.e., matching $\nabla_{\Vp}\pmb{\Psi}(\omega_j;\pmb{\pi}_i)$, 
is equivalent to matching the Jacobian $\MJ(\Vp)$  at $\Vp = \pmb{\pi}_i$. 
Therefore, in terms of the underlying optimization problem,
the cost function and its parameter gradients are exactly matched at the sampled points; and thus
the optimization algorithm does not see the difference between the reduced order model and the full order model at the selected interpolation points and parameter samples.

\begin{algorithm}
\caption{\cite{StuGuKilChatBeatOCon}}\label{Alg:ROMallsrcdet}
Given parameter interpolation points, $\pmb\pi_1,\cdots,\pmb\pi_{n_k}$, and  frequency interpolation points, $\omega_1,\cdots,\omega_{n_{\omega}}$
\begin{itemize}
  \item[(1)] Generate candidate bases for $i=1,\cdots {n_k}$\\
 $ \MV^{(i)}=\left[\left( \frac{\imath\o_1}{\nu}\ME-\MA(\pmb\pi_i)\right)^{-1}\MB, \cdots,  \left(\frac{\imath\o_{n_\o}}{\nu}\ME-\MA(\pmb\pi_i)\right)^{-1}\MB		\right]$\\
  $ \MW^{(i)}=\left[\left( \frac{\imath\o_1}{\nu}\ME-\MA(\pmb\pi_i)\right)^{-T}\MC^T, \cdots,  \left(\frac{\imath\o_{n_\o}}{\nu}\ME-\MA(\pmb\pi_i)\right)^{-T}\MC^T 		\right]$
  \item[(2)] Let $\MV = [\MV^{(1)},\MV^{(2)},\cdots,\MV^{({n_k})}]$ and $\MW = [\MW^{(1)},\MW^{(2)},\cdots,\MW^{({n_k})}]$.
 \item[(3)]Use SVD or rank-revealing QR factorizations of
 $\MV$ and
 $\MW$
 to compute the ROM bases $\mathbf{Q}_\MV$ and  $\mathbf{Q}_\MW$, respectively. To retain underlying symmetry, set $\MV_r \leftarrow [\mathbf{Q}_\MV, \ \mathbf{Q}_\MW]$ (one-sided projection) to obtain the global basis $\MV_r\: (=\MW_r)$.
\end{itemize}
\end{algorithm}
After the global basis $\MV_r$ has been computed using Algorithm \ref{Alg:ROMallsrcdet}, we can compute the reduced order model for every parameter $\Vp$ as in (\ref{Eq:ROMeq}) with $ \MW_r= \MV_r$.

In our DOT setting, $\MA(\Vp)=\MA_0 +\MA_1(\Vp)$ where $\MA_0$ is constant and
$\MA_1(\Vp)$ carries the parametric dependency. Since $\MA(\Vp) \in  \Rmn{n}{n}$ results from a finite difference discretization of the diffusion and absorption terms, and here we invert for absorption only, in our implementation $\MA_1(\Vp)$ is diagonal, allowing efficient computation of $\MA_r(\Vp) $ during optimization. We refer to \cite{StuGuKilChatBeatOCon} for details. The selection of interpolation points/parameter samples for our test problems is discussed in Section \ref{Sec:NumExp}.

In order to interpolate the matrix-valued transfer function $\pmb{\Psi}(\omega;\Vp)$
and the matrix-valued Jacobian 
$\MJ(\Vp)$ at the sample points,
Algorithm \ref{Alg:ROMallsrcdet} requires  solving systems (\ref{Eq:Srcsols}) and (\ref{Eq:Detsols}) for every parameter and frequency interpolation point to generate candidate bases, leading to $n_k n_\o n_s+n_k  n_\o n_d$ large linear systems. Since our main interest lies in large 3D problems, where iterative solves are necessary, we use the
number of large linear systems as a proxy for computational cost and count every right hand side as a separate linear system to solve.
However, where (sparse) direct solves are feasible and cost-wise appropriate, it is important to note some linear systems have the same coefficient matrix, which requires only one (sparse) decomposition followed by triangular solves; i.e., solving with 
$\left( \frac{\imath\o_j}{\nu}\ME-\MA(\pmb\pi_i)\right)$
and its transpose for multiple systems requires only 
one factorization. 

If the number of sources and detectors is large, the construction of the ROM bases in Algorithm \ref{Alg:ROMallsrcdet} incurs a substantial cost at the start of the inversion. In fact, for DOT using parametric model order reduction (PMOR) \cite{StuGuKilChatBeatOCon}, this is currently the dominant cost, as the actual inversion is now quite fast. In addition, after generating a candidate basis, Algorithm \ref{Alg:ROMallsrcdet} uses a rank-revealing reduction to compute the ROM basis, which will bring in an additional computational cost, especially when the number of sources and detectors is large and the reduced model order is relatively large as well. This is typically the case in our application. Extensive numerical experiments illustrate that
this rank-revealing reduction step results in a much smaller dimensional 
basis \cite{StuGuKilChatBeatOCon}. We will take advantage of this fact to drastically reduce the cost of the DOT inversion process. To give some perspective on the (numerical) rank of 
the candidate basis versus the number of columns, consider the second 3D test problem in Section~\ref{Sec:NumExp} with 225 sources and detectors, 3 parameter sample points, and 4 frequency interpolation points. The full-matrix interpolation method for computing the global ROM basis
(as in Algorithm \ref{Alg:ROMallsrcdet}) requires solving 5400 large linear systems.
However, after a rank-revealing factorization only 1228 of those directions are used for the global ROM basis. This shows that we have computed a lot of redundant information.  
A partial solution to this problem, based on ideas from Krylov subspace
recycling \cite{parks2006rks,KilStu2006}
to compute the global basis more efficiently for DOT, was recently explored in \cite{OConKilStuGug}.

The approach we propose in this paper to significantly reduce the number of linear solves is
based on multiplying a (nearly) low rank matrix by a modest number of random vectors to exploit (and recover) the low rank structure via sampling \cite{HalMartTropp}.
In Section \ref{Sec:RandROM}, we employ randomization to capture essentially the same ROM projection 
space at much lower cost via sampling. Hence, we avoid first solving each linear system in (\ref{Eq:Srcsols}) and (\ref{Eq:Detsols}) and thus also reduce cost of the rank-revealing factorization.

\section{Randomization and Reduced Order Modeling}\label{Sec:RandROM}
In this section, we develop a more efficient algorithm to compute the model reduction basis by exploiting low rank structure in the candidate bases computed by Algorithm \ref{Alg:ROMallsrcdet}. The new algorithm significantly reduces the number of large linear solves for constructing the global ROM basis
for DOT by random sampling. Algorithm  \ref{Alg:ROMallsrcdet} computes the candidate bases 
$[ \MV^{(1)} \; \MV^{(2)} \: 
   \ldots \: \MV^{(n_k)} ]$ and 
$[ \MW^{(1)} \; \MW^{(2)} \; 
   \ldots \; \MW^{(n_k)} ]$. 
Following \cite{HalMartTropp}, we can approximate 
the range of these matrices by sampling.
Rather than sampling the full candidate bases, we sample each submatrix corresponding to a frequency and
parameter point individually 
to capture the range of that submatrix. 
If we capture the range of each submatrix, then 
the sum of these ranges contains the range of the 
candidate bases. We implement this approach using a modest number of
random linear combinations of the right-hand sides as
randomized sources and detectors and solve only for this modest number of
resulting right hand sides. 
Towards this goal, we solve
\begin{align}\label{Eq:RandROMsolves}
\left(\frac{\imath\o_j}{\nu}\,\ME\, +\MA(\pmb\pi_{i})\right)\randsrcsol=\MB\Rdir^{(i)}_j \ \quad \text{and} \quad \
\left(\frac{\imath\o_j}{\nu}\,\ME\, +\MA(\pmb\pi_{i})\right)^T \randdetsol=\MC^T\Ldir^{(i)}_j,
\end{align}
where each $\Rdir_j^{(i)} = [\rdir_1 \cdots \rdir_{\ell_s}]_j^{(i)} \in \Rmn{n_s}{\ell_s}$ with $\ell_s \ll n_s$, and $\Ldir_j^{(i)} = [\ldir_1 \cdots \ldir_{\ell_d}]_j^{(i)} \in \Rmn{n_d}{\ell_d}$ with $\ell_d \ll n_d$ \cite{HabeChunHerr2012, AslanStuKilmer}. 
The numbers of samples, $\ell_s$ and 
$\ell_d$, must be somewhat larger
than the (expected) ranks of the
sampled blocks.

We use the Rademacher distribution to generate
$\Rdir_j^{(i)}$ and $\Ldir_j^{(i)}$; but other distributions can be used as well.
This approach significantly reduces the number of large linear system solves and adds only $(\ell_s+\ell_d)$ directions to the ROM basis per frequency and interpolation point as opposed to $(n_s+n_d)$. See Algorithm \ref{Alg:ROMrandsrcdet}.

\begin{algorithm}
\caption{} \label{Alg:ROMrandsrcdet}
Given parameter sample points $\pmb\pi_i$, frequency interpolation points $\omega_j$, and random matrices $\Ldir^{(i)}_j \in \Rmn{n_d}{\ell_d}$ and $\Rdir^{(i)}_j \in \Rmn{n_s}{\ell_s}$ for $i=1,\cdots, n_k$ and $j=1,\cdots, n_\o$,
\begin{itemize}
  \item[(1)] Generate candidate bases for $i=1,\cdots {n_k}$\\
 $ \MVrand^{(i)}=\left[\left( \frac{\imath\o_1}{\nu}\ME-\MA(\pmb\pi_i)\right)^{-1}\MB\Rdir^{(i)}_{1}, \cdots,  \left(\frac{\imath\o_{n_\o}}{\nu}\ME-\MA(\pmb\pi_i)\right)^{-1}\MB\Rdir^{(i)}_{n_\o}	\right]$\\
  $ \MWrand^{(i)}=\left[\left( \frac{\imath\o_1}{\nu}\ME-\MA(\pmb\pi_i)\right)^{-T}\MC^T\Ldir^{(i)}_{1}, \cdots,  \left(\frac{\imath\o_{n_\o}}{\nu}\ME-\MA(\pmb\pi_i)\right)^{-T}\MC^T\Ldir^{(i)}_{n_\o}			\right]$
\item[(2)] Let $\MVrand = [\MVrand^{(1)},\MVrand^{(2)},\cdots,\MVrand^{({n_k})}]$ and $\MWrand = [\MWrand^{(1)},\MWrand^{(2)},\cdots,\MWrand^{({n_k})}]$.
 \item [(3)]Use SVDs or rank-revealing QR factorizations of
 $[\MVrand^{(1)},\MVrand^{(2)},\cdots,\MVrand^{({n_k})}]$ and  $[\MWrand^{(1)},\MWrand^{(2)},\cdots,\MWrand^{({n_k})}]$
 to compute ROM bases $\mathbf{Q}_{\MVrand}$ and $\mathbf{Q}_{\MWrand} $,
 respectively. Then, set  $\MVrand_r \leftarrow [\mathbf{Q}_{\MVrand}, \ \mathbf{Q}_{\MWrand}]$
 (one-sided projection) to obtain the global basis $\MVrand_r\: (=\MWrand_r)$.
\end{itemize}
\end{algorithm}
Algorithm \ref{Alg:ROMrandsrcdet} computes an alternative global basis $\MVrand_r$, which can be used as in (\ref{Eq:TransferFunc})--(\ref{Eq:ROMeq})
to compute the reduced transfer function and the reduced order model for every parameter $\Vp$. This approach can easily be combined with the approach introduced in \cite{OConKilStuGug} to compute
only necessary extensions of the global basis. However, this idea is not exploited in this paper.

We will provide a theoretical justification for the low-rank structure of the candidate basis $\MV$ resulting from Algorithm \ref{Alg:ROMallsrcdet}
and a justification for Algorithm \ref{Alg:ROMrandsrcdet} in the next section. However, we first illustrate these concepts numerically.
We consider a 2D experiment with 32 sources and 32 detectors. We use 4 parameter sample points and 2 frequencies to construct $\MV$ using Algorithm \ref{Alg:ROMallsrcdet}. Figure \ref{Fig:SingValsV} shows the fast decay of the singular values of the candidate basis. We also give the cosines of the canonical angles between $\Ra{\MV_r}$, the global basis computed using all sources and detectors, and  $\Ra{\MVrand_r}$, the global basis computed using randomized sources and detectors, for the same experiment in Figure \ref{Fig:CanonicalAng}. Clearly, the global basis computed using Algorithm \ref{Alg:ROMrandsrcdet} is very close to the global basis computed using Algorithm \ref{Alg:ROMallsrcdet}. So, for this problem, the proposed model reduction basis
computed using random simultaneous sources and detectors spans
almost the same subspace as the
full interpolation basis (ignoring directions corresponding to tiny singular values). Our approach constructs a global basis $\MVrand_r$ such that $\Ra{\MVrand_r} \approx \Ra{\MV_r}$ 
and the reduced transfer function $\Psirand_r(\o;\Vp)$ still satisfies the interpolation conditions (\ref{Eq:ROMhermite})
for $\pmb{\Psi}(\o;\Vp)$ at the interpolation points approximately (and typically quite accurately). Algorithm \ref{Alg:ROMrandsrcdet} greatly reduces the cost of building ROMs.
In the next section, we provide some further theoretical motivation for our approach.

\begin{figure}[t]
\begin{minipage}{.48\textwidth}
  \centering
 \includegraphics[width=1\linewidth]{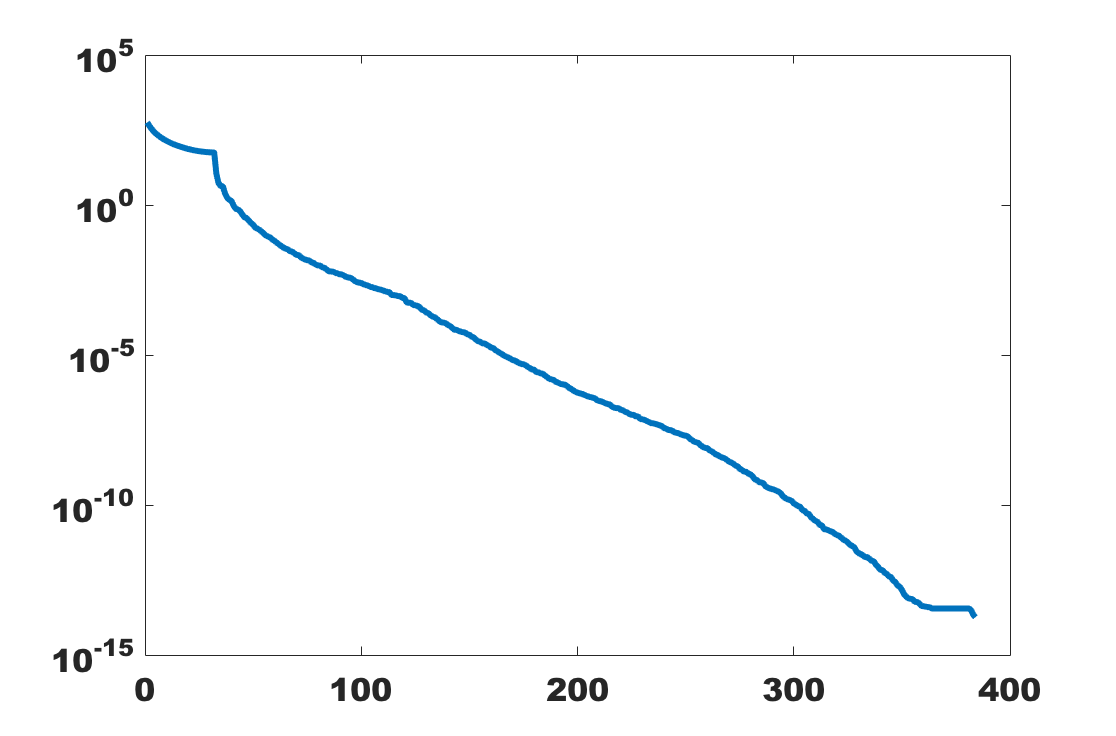}
   \caption{Singular values of the candidate basis $\MV$ before computing global basis.}
   \label{Fig:SingValsV}
\end{minipage}\:\:\:
\begin{minipage}{.48\textwidth}
  \centering
  \includegraphics[width=1\linewidth]{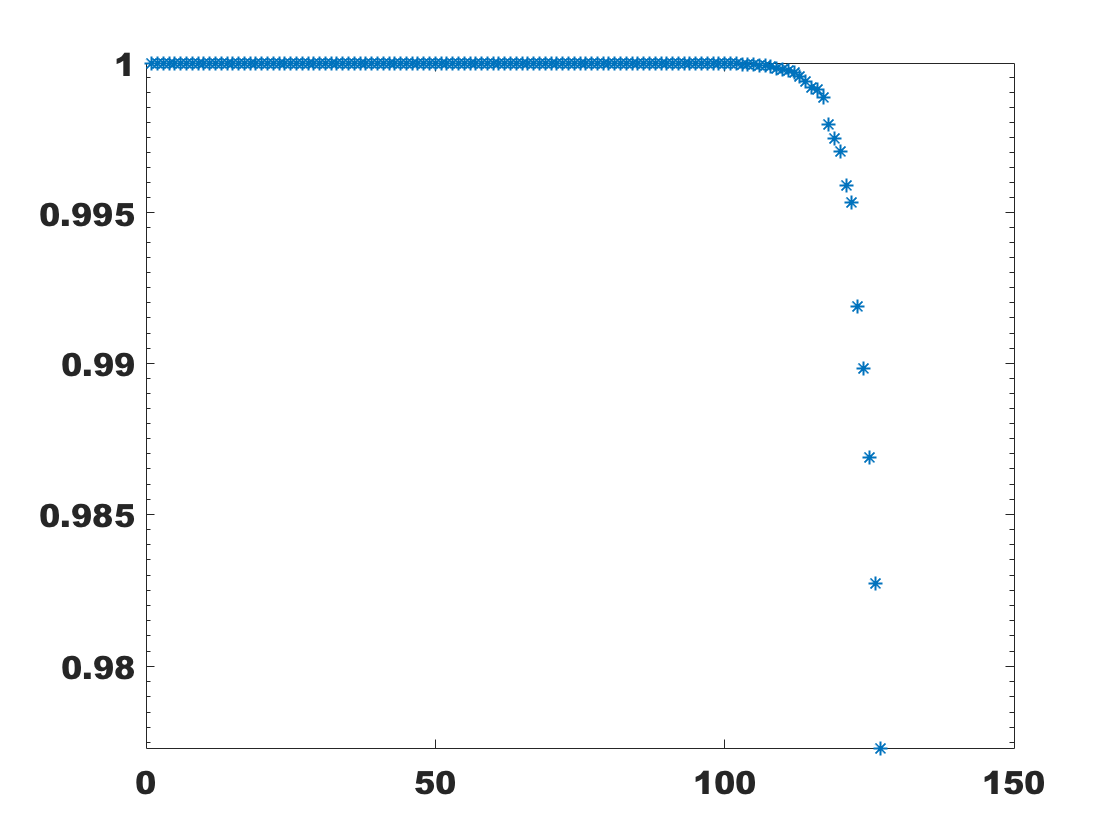}
  \caption{The cosines of the canonical angles between $\Ra{\MV_r}$ and $\Ra{\MVrand_r}$.}
  \label{Fig:CanonicalAng}
\end{minipage}
\end{figure}

Before we start developing the theoretical justification for Algorithm \ref{Alg:ROMrandsrcdet}, we note that, in the setting of interpolatory model reduction,
our approach may also be interpreted as picking random tangential interpolation directions. Tangential interpolation refers to interpolating a matrix-valued function only along selected directions \cite{AntBG20}, as opposed to enforcing full matrix interpolation as we would like to enforce in DOT as shown in \eqref{Eq:ROMhermite}.
By rephrasing the tangential interpolation theory developed for parametric systems \cite{BauBBG09}, we can state that the reduced parametric model using the global basis $\MVrand_r$ from Algorithm \ref{Alg:ROMrandsrcdet} satisfies the following tangential interpolation conditions for $j=1,\cdots, n_\o$ and $i=1,\cdots, n_k$:
\begin{align}
  (\Ldir^{(i)}_j)^T \pmb{\Psi}(\omega_j;\pmb{\pi}_i) = (\Ldir^{(i)}_j)^T \Psirand_r(\omega_j;\pmb{\pi}_i),   
    &\:\:\:   \:\:\:
  \pmb{\Psi}_r(\omega_j;\pmb{\pi}_i)\Rdir^{(i)}_j = \Psirand_r(\omega_j;\pmb{\pi}_i)\Rdir^{(i)}_j,  \nonumber\\ 
  \nabla_{\Vp}\:((\Ldir^{(i)}_j)^T \pmb{\Psi}(\omega_j;\pmb{\pi}_i)\Rdir) & =  \nabla_{\Vp}\: ((\Ldir^{(i)}_j)^T \Psirand_r(\omega_j;\pmb{\pi}_i)\Rdir^{(i)}_j),~\mbox{and}      \label{Eq: OptCond2}   \\\nonumber
  (\Ldir^{(i)}_j)^T \pmb{\Psi}^\prime(\omega_j;\pmb{\pi}_i)\Rdir& = (\Ldir^{(i)}_j)^T \Psirand_r^\prime(\omega_j;\pmb{\pi}_i)\Rdir^{(i)}_j.
\end{align}
The difference is clear: As opposed to interpolating the full-matrix valued functions such as $\pmb{\Psi}(\omega_j;\pmb{\pi}_i)$, we only interpolate them along tangential directions selected from the Rademacher distribution. However, despite this theoretical link to tangential interpolation, our emphasis in this paper is to approximate the full matrix interpolation (all directions) property as in \eqref{Eq:ROMhermite} using the ROM projection basis obtained via Algorithm 
\ref{Alg:ROMrandsrcdet}. This is crucial in the DOT setting since matching the cost function and parameter gradient require full-matrix interpolation; not a tangential one. Therefore, our goal is to justify analytically and numerically that, in the DOT setting, the efficiently computed reduced basis via randomized sources and detectors, approximates the full global basis accurately and thus provides nearly full matrix interpolation. Exploring potential links with tangential interpolation is an interesting 
direction for future research, which might help identifying how to choose these directions optimally. The recent work \cite{hund2018H2L2} might provide a link in this direction.

\section{Combining Randomization for Efficient  ROM}\label{sec:TheorJust}
In the previous section, we showed for a test problem that the model reduction space computed by Algorithm \ref{Alg:ROMrandsrcdet} is very close to the model reduction space computed by Algorithm \ref{Alg:ROMallsrcdet}. In the following  subsections, we provide theoretical motivation for the low-rank structure of the candidate global basis computed using Algorithm \ref{Alg:ROMallsrcdet}, thus justifying the use of random sampling in Algorithm \ref{Alg:ROMrandsrcdet}. 

To make the proofs and presentation more concise, we first rewrite the transfer function in terms of a symmetric positive definite (SPD) coefficient matrix as in \cite{KilStu2006,OConKilStuGug}. Then, we analyze the relative changes in the candidate bases computed by Algorithm \ref{Alg:ROMallsrcdet} for consecutive parameters to motivate Algorithm \ref{Alg:ROMrandsrcdet}. 

\subsection{Rewriting the Transfer Function}\label{Sec:RewritingH}
For completeness, we show that the full order transfer function $\pmb{\Psi}(\o;\Vp)$ in (\ref{Eq:FOMtransferFun}) can be \emph{equivalently} written in terms of an SPD coefficient matrix. We follow the discussion in \cite{KilStu2006,OConKilStuGug} based
on the same discretization scheme. Let the matrix $\left(\frac{\imath \omega_j}{\nu} \ME +\MA(\Vp)\right)$ have the following block structure for the two-dimensional case,
\begin{equation}\label{Eq: BlockA}
\begin{bmatrix}
\MG & \MD_1\\
\MD_2 & \MF(\Vp)+ \frac{ \imath\omega h^2}{\nu} \MI
\end{bmatrix},
\end{equation}
where $\MG$ is an invertible diagonal matrix, $\MD_1$ has at most one nonzero per row (these occur only in the first $2N_x$ and the last $2N_x$ columns) and $\MD_2$ has the same sparsity pattern as $\MD^T_1$ (but with different entries). In addition, we have ordered the $N_xN_y$ unknowns such that the unknowns on the top and the bottom boundary appear first, followed by lexicographical ordering of the remaining unknowns. In addition, the matrices $\MC$ and $\MB$ contain selected columns from an  $N_xN_y\times N_xN_y$  identity matrix, and $\MB$ is scaled by $\displaystyle \frac{1}{h^2}$. Hence, the matrices $\MC$ and $\MB$ 
have the partitioning
\begin{equation}\label{Eq:MatrixC1B1 }
\begin{bmatrix}
\MC_1 \:\:
\textbf{0}
\end{bmatrix}, \begin{bmatrix}
\textbf{0}\\
\MB_1
\end{bmatrix},
\end{equation}
conforming with (\ref{Eq: BlockA}).

Next, we rewrite the differential-algebraic system (\ref{Eq:ModelDynSys}) with $\wh{\ME} =\frac{1}{\nu} \ME$
\begin{equation}\label{Eq:DynamSys}
\displaystyle \wh{\ME}\dot{\Vy}(t; \Vp)= -\MA(\Vp) \Vy(t; \Vp) + \MB \, \Vu(t)\:\:\: \text{with}\:\:\: \Vm(t; \Vp)  = \MC \, \Vy(t; \Vp),
\end{equation}
where $\Vy$ denotes the discretized photon flux, and $\Vm$ is the vector of detector outputs.
Partitioning the matrix $\wh{\ME}$ conforming with (\ref{Eq: BlockA}) gives
\begin{equation}\label{Eq:BlockE}
\wh{\ME}=\begin{bmatrix}
\pmb{0} &\pmb{0} \\
\pmb{0}  & \frac{1}{\nu} \MI
\end{bmatrix}.
\end{equation}
Substituting (\ref{Eq: BlockA}), (\ref{Eq:MatrixC1B1 }), and (\ref{Eq:BlockE}) into (\ref{Eq:DynamSys}) gives
\begin{equation}\label{Eq:SymDynSysDer}
\begin{bmatrix}
\pmb{0} &\pmb{0} \\
\pmb{0}  & \frac{1}{\nu} \MI
\end{bmatrix} \begin{bmatrix}
\dot{\Vy}_1  \\
\dot{\Vy}_2
\end{bmatrix}=-\begin{bmatrix}
\MG & \MD_1\\
\MD_2 & \MF(\Vp)
\end{bmatrix} \begin{bmatrix}
\Vy_1  \\
\Vy_2
\end{bmatrix}+ \begin{bmatrix}
\textbf{0}\\
\MB_1
\end{bmatrix}\Vu,
\end{equation}
and we obtain
\begin{align}
\pmb{0}&=-\MG \Vy_1-\MD_1\Vy_2, \label{Eq:y1}\\
\frac{1}{\nu} \dot{\Vy}_2&=-\MD_2 \Vy_1 - \MF(\Vp) \Vy_2+ \MB_1 \Vu. \label{Eq:y2}
\end{align}
Substituting $\Vy_1=-\MG^{-1} \MD_1 \Vy_2$ from (\ref{Eq:y1}) into (\ref{Eq:y2}) yields
\begin{equation}\label{Eq:Sym_y_2}
\frac{1}{\nu} \dot{\Vy}_2=-\left[\MF(\Vp)- \MD_2 \MG^{-1} \MD_1\right]\Vy_2+\MB_1 \Vu=\wt{\MA}(\Vp)\Vy_2+\wt{\MB}\Vu.
\end{equation}
It is straightforward to show that
\begin{equation}\label{Eq:SymVecOutput}
\Vm(t; \Vp)  = \MC\, \Vy(t; \Vp)= -\MC_1 \MG^{-1}\MD_1 \Vy_2=\wt{\MC} \Vy_2.
\end{equation}
Next, we define the transfer function, after eliminating the singular matrix $\ME$, as
\begin{equation} \label{eq:odetf}
\wh{\pmb{\Psi}}(\o;\Vp)=\wt{\MC}\left( \frac{\imath \omega}{\nu} \MI+ \wt{\MA}(\Vp)\right)^{-1} \wt{\MB}.
\end{equation}
Hence, from a system theoretic perspective, we converted a system of differential-algebraic equations (DAE) to a system of ordinary differential equations (ODE). For details on interpolatory model order reduction for DAEs, we refer the reader to \cite{GugStyWyatt2013}. In the remainder, we consider this system of ODEs and its transfer function (\ref{eq:odetf}).

\subsection{Perturbations in the Candidate Basis}\label{Sec:GlobalBasis}

In this section, we show that the candidate model reduction basis
constructed by Algorithm \ref{Alg:ROMallsrcdet} is low rank if the sample points $\pmb \pi_i$ correspond to small anomalies. Hence, the coefficient matrix $\MAsym(\Vp)$
is a small perturbation of the discretized Laplacian, which is the coefficient matrix for the case of no anomaly, i.e., $\MAsym(\V0)$. 
In this case, we can bound the norm of the difference between any two solutions corresponding to small anomalies by twice the norm of the difference between a solution for a small anomaly and the solution for the Laplacian (no anomaly).
In this section, $\|.\|$ denotes the Frobenius norm.

We anticipate that due to the low-rank structure of the candidate basis computed using Algorithm \ref{Alg:ROMallsrcdet}, the model reduction basis 
from  Algorithm \ref{Alg:ROMrandsrcdet}
using randomized sources and detectors essentially captures the same space. We justify this below in a slightly modified setting.

In Section \ref{Sec:RewritingH}, we showed that the transfer function can be equivalently written as
\begin{equation}\label{eq:newH}
\wh{\pmb{\Psi}}(\omega,\Vp)= \wt{\MC} \left(  \frac{ \imath\omega}{\nu} \MI +\wt{\MA} (\Vp)\right)^{-1} \wt{\MB},
\end{equation}
where $\MAsym(\Vp)$ is SPD and (by assumption) a small perturbation of the discretized Laplacian. Next, we show that relative changes in the candidate basis under small changes in the parameter vector, computed using Algorithm \ref{Alg:ROMallsrcdet}, remain small. For simplicity, we assume $\o=0$ in the following derivations.

Since we use a finite difference discretization, we note that, for changes in $\Vp$, the changes in $\MAsym(\Vp)$ are small relative to the magnitude of the matrix coefficients and occur only on the diagonal; see \cite{StuGuKilChatBeatOCon}. Since the optimization constrains the updates in $\Vp$, in each step, to be relatively small, after a few iterations, the changes in $\MAsym(\Vp)$ are also highly localized. Hence, the changes in the discretized matrix are limited to a few diagonal coefficients that
are also small, i.e.,
\begin{equation}\label{Eq:DeltaAperturb}
\wh{\Delta \MA} =\MAsym(\pmb{\pi}_{i+1})-\MAsym(\pmb{\pi}_i)= \wt{\Delta \MA}^{(i+1)} - \wt{\Delta \MA}^{(i)},
\end{equation}
where $\wt{\Delta \MA}^{(i+1)} = \MAsym(\pmb{\pi}_{i+1})-\MAsym(\mathbf{0})$ and $\wt{\Delta \MA}^{(i)} = \MAsym(\pmb{\pi}_i)-\MAsym(\mathbf{0})$. Here, $\MAsym(\mathbf{0})$ corresponds to the discretized Laplacian (no anomaly). In addition, the nonzero diagonal coefficients in $\wt{\Delta \MA}^{(i+1)}$ and
$\wt{\Delta \MA}^{(i)}$ are of magnitude $O(h^2)$.
Therefore, $\wt{\Delta \MA}^{(i)}$ satisfies
\begin{equation}\label{Eq:DeltaA_UY}
  \wt{\Delta \MA}^{(i)} = \MUtil^{(i)} \Xi^{(i)} (\MUtil^{(i)})^T,
\end{equation}
where $\Xi^{(i)}$ is a $k\times k$ diagonal matrix with entries of magnitude $O(h^2)$ and $k$ is a modest number. $\MUtil^{(i)}$ is given by
\begin{equation}\label{Eq:Utildef}
  \MUtil^{(i)} =\left[\Ve_{j_1}\:\: \Ve_{j_2}\:\: \cdots \:\: \Ve_{j_k} \right],
\end{equation}
where $\Ve_{j}$ denotes the $j^{th}$ Cartesian basis vector and the indices $j_k$ correspond to pixels where absorption has changed. Similarly,
$\wt{\Delta \MA}^{(i+1)}$ satisfies
\begin{equation}\label{Eq:DeltaA_UY2}
  \wt{\Delta \MA}^{(i+1)} = \MUtil^{(i+1)} \Xi^{(i+1)}(\wt{\MU}^{(i+1)})^T.
\end{equation}
Next, we theoretically justify that the updates in the candidate basis are indeed low rank based on the special structure of
$\wt{\Delta \MA}^{(i)}$ and $\wt{\Delta \MA}^{(i+1)}$ and
the magnitude of the diagonal coefficients, which remains small as $\Vp$ changes.

\begin{lemma}\label{Lemma:DeltaVSher} Let $\Delta{\MV}=\MV^{(i+1)}-\MV^{(i)}$ denote the change in a candidate basis, where $\MV^{(i)}$ and $\MV^{(i+1)}$ are defined as in Algorithm \ref{Alg:ROMallsrcdet} for $\o=0$.
Let $\MAsym(\pmb{\pi}_i)$ and $\MAsym(\pmb{\pi}_{i+1})$ correspond to the discretized
matrices for parameter vectors $\pmb{\pi}_i$ and $\pmb{\pi}_{i+1}$, and define $\MDeltasym=\MAsym(\pmb{\pi}_i)-\MAsym(\V0)$ as in (\ref{Eq:DeltaA_UY}), where
$\MAsym(\mathbf{0})$  is the discretized Laplacian. Then,

\vspace{-2ex}
{\small\begin{align}\label{Eq:DeltaVboundLemma}
\frac{\|\Delta\MV\|}{\|\MV^{(0)}\|} &\leq
   \left\| \MAsym(\mathbf{0})^{-1} \MUtil^{(i)}  \right\| \:
   \left\| \left( \MI_k+ \Xi^{(i)} \MYtil  \MAsym(\mathbf{0})^{-1} \MUtil^{(i)} \right)^{-1} \right\| \: \left\| \Xi^{(i)} \MYtil \right\|        \\
   &+
   \left\| \MAsym(\mathbf{0})^{-1} \MUtil^{(i+1)}  \right\| \:
   \left\| \left( \MI_k+ \Xi^{(i+1)} (\wt{\MU}^{(i+1)})^T  \MAsym(\mathbf{0})^{-1}
   \MUtil^{(i+1)} \right)^{-1} \right\| \:
   \left \| \Xi^{(i+1)} ( \wt{\MU}^{(i+1)} )^T \right\|,
   \nonumber
\end{align}}
where $\MV^{(0)}=\MAsym(\Vo)^{-1}\MBt$ and $\MI_k$ is the identity matrix of size $k\times k$.
\end{lemma}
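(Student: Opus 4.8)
The plan is to obtain $\MV^{(i)}$ from the no-anomaly basis $\MV^{(0)}=\MAsym(\mathbf{0})^{-1}\MBt$ by a Sherman--Morrison--Woodbury update and then to route the comparison of $\MV^{(i+1)}$ and $\MV^{(i)}$ through $\MV^{(0)}$. First I would record that, setting $\o=0$ in Algorithm~\ref{Alg:ROMallsrcdet} and using the symmetric reformulation of Section~\ref{Sec:RewritingH}, the candidate basis is simply $\MV^{(i)}=\MAsym(\pmb{\pi}_i)^{-1}\MBt$, and that by \eqref{Eq:DeltaAperturb}--\eqref{Eq:DeltaA_UY} we have $\MAsym(\pmb{\pi}_i)=\MAsym(\mathbf{0})+\MUtil^{(i)}\Xi^{(i)}\MYtil$, i.e.\ a rank-$k$ perturbation of the SPD matrix $\MAsym(\mathbf{0})$.

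The key step is the Woodbury identity applied to this low-rank update,
\begin{equation*}
\MAsym(\pmb{\pi}_i)^{-1}=\MAsym(\mathbf{0})^{-1}-\MAsym(\mathbf{0})^{-1}\MUtil^{(i)}\left((\Xi^{(i)})^{-1}+\MYtil\,\MAsym(\mathbf{0})^{-1}\MUtil^{(i)}\right)^{-1}\MYtil\,\MAsym(\mathbf{0})^{-1},
\end{equation*}
which is legitimate because $\Xi^{(i)}$ is a nonsingular diagonal matrix (its entries, the $O(h^2)$ absorption changes at the selected pixels, are nonzero by construction of $\MUtil^{(i)}$) and $\MAsym(\pmb{\pi}_i)$ is SPD, hence invertible. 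Right-multiplying by $\MBt$ and using $\MAsym(\mathbf{0})^{-1}\MBt=\MV^{(0)}$ expresses $\MV^{(i)}$ through $\MV^{(0)}$. The only remaining algebra is the regrouping $\left((\Xi^{(i)})^{-1}+M\right)^{-1}=\left(\MI_k+\Xi^{(i)}M\right)^{-1}\Xi^{(i)}$ with $M=\MYtil\,\MAsym(\mathbf{0})^{-1}\MUtil^{(i)}$, which converts the middle inverse into precisely the factor appearing in \eqref{Eq:DeltaVboundLemma} and isolates $\Xi^{(i)}\MYtil$ on the right, giving $\MV^{(0)}-\MV^{(i)}=\MAsym(\mathbf{0})^{-1}\MUtil^{(i)}\left(\MI_k+\Xi^{(i)}\MYtil\,\MAsym(\mathbf{0})^{-1}\MUtil^{(i)}\right)^{-1}\Xi^{(i)}\MYtil\,\MV^{(0)}$. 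Taking Frobenius norms, applying submultiplicativity, and dividing by $\|\MV^{(0)}\|$ yields the first summand of the stated bound; the identical computation for index $i+1$ (with $\MUtil^{(i)},\Xi^{(i)},\MYtil$ replaced by $\MUtil^{(i+1)},\Xi^{(i+1)},(\wt{\MU}^{(i+1)})^T$) yields the second summand.

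To finish I would write $\Delta\MV=\MV^{(i+1)}-\MV^{(i)}=(\MV^{(i+1)}-\MV^{(0)})-(\MV^{(i)}-\MV^{(0)})$ and apply the triangle inequality, which adds the two bounds and produces \eqref{Eq:DeltaVboundLemma}. This is exactly the ``bound by twice the difference from the no-anomaly solution'' argument announced before the lemma.

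The genuinely important choice---more than a technical difficulty---is to compare both bases against the \emph{common} baseline $\MAsym(\mathbf{0})^{-1}$ rather than comparing $\MAsym(\pmb{\pi}_{i+1})^{-1}$ with $\MAsym(\pmb{\pi}_i)^{-1}$ directly. A direct comparison would involve the perturbation $\wt{\Delta\MA}^{(i+1)}-\wt{\Delta\MA}^{(i)}$ around the non-baseline matrix $\MAsym(\pmb{\pi}_i)$, whereas routing through $\MAsym(\mathbf{0})$ makes each term a clean Woodbury update of the same well-conditioned SPD matrix. The points needing care are then only the two invertibility checks underpinning Woodbury and verifying that the $\Xi^{(i)}$ regrouping lands on exactly the grouping $\|\Xi^{(i)}\MYtil\|$ used in the statement; everything else is routine submultiplicativity of the Frobenius norm.
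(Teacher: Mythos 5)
Your proof is correct and follows essentially the same route as the paper's: both compare $\MV^{(i)}$ and $\MV^{(i+1)}$ against the common baseline $\MV^{(0)}=\MAsym(\mathbf{0})^{-1}\MBt$ via Sherman--Morrison--Woodbury, arrive at exactly the paper's identity \eqref{Eq:V2V1proof2}, and finish with the triangle inequality and submultiplicativity of the Frobenius norm. The only cosmetic difference is that you invoke Woodbury in the form involving $(\Xi^{(i)})^{-1}$ (and therefore must justify invertibility of $\Xi^{(i)}$), whereas the paper factors $\MAsym(\pmb{\pi}_i)=\MAsym(\mathbf{0})\left(\MI_n+\MAsym(\mathbf{0})^{-1}\wt{\Delta\MA}^{(i)}\right)$ and applies the identity $(\MI_n+UV)^{-1}=\MI_n-U(\MI_k+VU)^{-1}V$, which reaches the same expression without that assumption.
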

\begin{proof} From Algorithm \ref{Alg:ROMallsrcdet}, we get
\begin{align}\label{Eq:DeltaVdef}
\Delta\MV=\MV^{(i+1)}-\MV^{(i)}=[\MV^{(i+1)}-\MV^{(0)}]- [\MV^{(i)}-\MV^{(0)}].
\end{align}
Using
\begin{align*}
\MAsym(\pmb{\pi}_i)&=\MAsym(\V0)(\MI_n+\MAsym(\V0)^{-1}\wt{\Delta \MA}^{(i)}),
\end{align*}
we obtain
\begin{align}\label{Eq:AinvB_IDeltaA}
\MAsym(\pmb{\pi}_i)^{-1}\MBt &=(\MI_n+\MAsym(\V0)^{-1}\MDeltasym)^{-1}\MAsym(\V0)^{-1}\MBt,
\end{align}
where $\MAsym(\pmb{\pi}_i)$ is invertible for any $\Vp$. Substituting (\ref{Eq:DeltaA_UY}) into (\ref{Eq:AinvB_IDeltaA}), using the Sherman-Morrison-Woodbury formula, and
$\MV^{(0)}=\MAsym(\Vo)^{-1}\MBt$, we get
{\small\begin{equation}\label{Eq:V2V1proof2}
\MAsym(\pmb{\pi}_i)^{-1}\MBt =
  \left[ \MI_n-\MAsym(\V0)^{-1}\MUtil^{(i)}
  \left( \MI_k+\Xi^{(i)} \MYtil\MAsym(\V0)^{-1}\MUtil^{(i)} \right)^{-1}
  \Xi^{(i)} \MYtil \right]  \MV^{(0)}.
\end{equation}}
Similarly, we have
{\small \begin{equation}\label{Eq:V2V0proof2}
\MAsym(\pmb{\pi}_{i+1})^{-1}\MBt=
  \left[ \MI_n-\MAsym(\V0)^{-1}\MUtil^{(i+1)}
  \left( \MI_k+\Xi^{(i+1)}
     ( \wt{\MU}^{(i+1)} )^T \MAsym(\V0)^{-1} \MUtil^{(i+1)} \right)^{-1}
     \Xi^{(i+1)} ( \wt{\MU}^{(i+1)} )^T \right]  \MV^{(0)}.
\end{equation}}
Using (\ref{Eq:DeltaVdef}), \eqref{Eq:V2V1proof2} and \eqref{Eq:V2V0proof2}, we obtain
\begin{align}
\Delta\MV
  &=
  \left[ \MAsym(\V0)^{-1}\MUtil^{(i)}
    \left( \MI_k+\Xi^{(i)} \MYtil\MAsym(\V0)^{-1}\MUtil^{(i)} \right)^{-1}\Xi^{(i)} \MYtil
    \right.
    \nonumber \\
  &-
  \left.
  \MAsym(\V0)^{-1}\MUtil^{(i+1)}
      \left( \MI_k+\Xi^{(i+1)} ( \wt{\MU}^{(i+1)} )^T \MAsym(\V0)^{-1}\MUtil^{(i+1)} \right)^{-1} \Xi^{(i+1)} ( \wt{\MU}^{(i+1)} )^T
      \right] \MV^{(0)},
\label{Eq:DeltaVlowrnk}
\end{align}
where $\MAsym(\V0)^{-1}\MUtil^{(i)} \in \Rmn{n}{k}$ and $\MAsym(\V0)^{-1}\MUtil^{(i+1)} \in \Rmn{n}{k}$. Taking norms and using the submultiplicative property in \eqref{Eq:DeltaVlowrnk} yield

\vspace{-2ex}
{\small \begin{align*}
\|\Delta\MV\| &\leq
    \left[  \| \MAsym(\mathbf{0})^{-1} \MUtil^{(i)}
    \left( \MI_k+ \Xi^{(i)} \MYtil  \MAsym(\mathbf{0})^{-1} \MUtil^{(i)} \right)^{-1} \Xi^{(i)} \MYtil \| \right.
    \\
 &+
   \left.
   \| \MAsym(\mathbf{0})^{-1} \MUtil^{(i+1)}
   \left( \MI_k+ \Xi^{(i+1)} ( \wt{\MU}^{(i+1)} )^T
   \MAsym(\mathbf{0})^{-1} \MUtil^{(i+1)} \right)^{-1} \Xi^{(i+1)}
   ( \wt{\MU}^{(i+1)} )^T \| \right] \| \MV^{(0)}\|.
\end{align*}}
Then, the relative change in $\MV^{(i)}$ is bounded by
\begin{align*}
\frac{\| \Delta\MV \|}{\| \MV^{(0)} \|} &\leq \| \MAsym(\mathbf{0})^{-1}
    \MUtil^{(i)}(\MI_k+ \Xi^{(i)} \MYtil  \MAsym(\mathbf{0})^{-1} \MUtil^{(i)} )^{-1} \Xi^{(i)} \MYtil \|
\nonumber \\
   &+ \| \MAsym(\mathbf{0})^{-1} \MUtil^{(i+1)}
   ( \MI_k+ \Xi^{(i+1)} ( \wt{\MU}^{(i+1)} )^T \MAsym(\mathbf{0})^{-1} \MUtil^{(i+1)} )^{-1} \Xi^{(i+1)} ( \wt{\MU}^{(i+1)} )^T \| ,
\end{align*}
which gives the desired inequality
\begin{align}\label{Eq:DeltaVboundprf}
\frac{\| \Delta\MV \|}{\| \MV^{(0)} \|}
    &\leq
    \| \MAsym(\mathbf{0})^{-1} \MUtil^{(i)}  \| \:
    \| (\MI_k+ \Xi^{(i)} \MYtil  \MAsym(\mathbf{0})^{-1} \MUtil^{(i)} )^{-1} \| \:
    \| \Xi^{(i)} \MYtil \|
\nonumber \\
    &+
    \| \MAsym(\mathbf{0})^{-1} \MUtil^{(i+1)}  \| \:
    \| ( \MI_k+ \Xi^{(i+1)} ( \wt{\MU}^{i+1} )^T \MAsym(\mathbf{0})^{-1}
        \MUtil^{(i+1)} )^{-1} \| \:
    \| \Xi^{(i+1)} ( \wt{\MU}^{(i+1)} )^T \|.
\nonumber \\
\end{align}
\end{proof}

\begin{remark} Equation (\ref{Eq:DeltaVlowrnk}) shows that the changes in the candidate basis lie in $\Ra{\: \MAsym(\mathbf{0})^{-1} [\MUtil^{(i)} \:\:\:\: \MUtil^{(i+1)}] \:}$. Hence, the updates in the global basis are indeed low rank.
\end{remark}

{Next, we give a bound on (\ref{Eq:DeltaVboundLemma})}.
\begin{lemma} Let $\Delta \MV=\MV^{(i+1)}-\MV^{(i)}$ denote the change in a candidate basis, where $\MV^{(i+1)}$ and $\MV^{(i)}$ are defined as in Algorithm \ref{Alg:ROMallsrcdet}. Then,
 \begin{equation}\label{Eq:DeltaVBoundh}
\displaystyle   \frac{\| \Delta \MV \| }{\| \MV^{(0)}\|} \leq 2C h,
 \end{equation}
 where $C$ is a constant and $h$ is the mesh width.
\end{lemma}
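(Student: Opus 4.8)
The plan is to refine the bound \eqref{Eq:DeltaVboundLemma} from the previous lemma by estimating each of its three factors as a function of the mesh width $h$. Since the right-hand side of \eqref{Eq:DeltaVboundLemma} is a sum of two structurally identical summands, differing only in the superscript $i$ versus $i+1$, it suffices to bound a single generic summand by $Ch$; the factor $2$ in \eqref{Eq:DeltaVBoundh} then follows at once, taking $C$ to be the larger of the two constants. Throughout I treat the number $k$ of changed pixels as a modest constant and absorb it, together with the Green's function norm appearing below, into $C$.

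First I would dispose of the two easy factors. Because $\MUtil^{(i)}$ consists of distinct Cartesian basis vectors, it has orthonormal columns, so $\|\Xi^{(i)}\MYtil\| = \|\Xi^{(i)}(\wt{\MU}^{(i)})^T\| = \|\Xi^{(i)}\|$; as the diagonal entries of $\Xi^{(i)}$ are of magnitude $O(h^2)$, this factor is $O(h^2)$. For the inverse factor I would write its argument as $\MI_k + \MN^{(i)}$ with $\MN^{(i)} = \Xi^{(i)}\MYtil\MAsym(\mathbf{0})^{-1}\MUtil^{(i)}$, and observe that $\MYtil\MAsym(\mathbf{0})^{-1}\MUtil^{(i)}$ is a $k\times k$ principal submatrix of the inverse discretized Laplacian, i.e.\ the discrete Green's function sampled at the (interior) anomaly pixels. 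Its entries remain bounded apart from a mild $\log(1/h)$ growth of the diagonal self-interaction, so multiplication by the $O(h^2)$ entries of $\Xi^{(i)}$ gives $\|\MN^{(i)}\| = O(h^2\log(1/h)) \to 0$. A Neumann-series estimate then yields $\|(\MI_k + \MN^{(i)})^{-1}\| \le (1-\|\MN^{(i)}\|)^{-1} = O(1)$ for $h$ sufficiently small.

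The crux is the remaining factor $\|\MAsym(\mathbf{0})^{-1}\MUtil^{(i)}\|$, which selects $k$ columns of the inverse discretized Laplacian. The naive spectral bound $\|\MAsym(\mathbf{0})^{-1}\|_2\sqrt{k} = O(h^{-2})$ is a full power of $h$ too weak, since it would only yield an $O(1)$ bound overall. Instead I would exploit the $h^2$-scaling of the discretization: each column $\MAsym(\mathbf{0})^{-1}\Ve_{j_l}$ is, to leading order, the continuous Green's function $G(\cdot,x_{j_l})$ of the diffusion operator sampled on the grid, the two factors of $h^2$ cancelling. Because in two dimensions $G(\cdot,y)$ has only a logarithmic, hence square-integrable, singularity, the Riemann-sum identity $\sum_m |G(x_m,x_{j_l})|^2 \approx h^{-2}\,\|G(\cdot,x_{j_l})\|_{L^2}^2$ gives $\|\MAsym(\mathbf{0})^{-1}\Ve_{j_l}\|_2 = O(h^{-1})$, and summing over the $k$ columns yields $\|\MAsym(\mathbf{0})^{-1}\MUtil^{(i)}\| = O(h^{-1})$.

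Multiplying the three estimates, the generic summand is $O(h^{-1})\cdot O(1)\cdot O(h^2) = O(h) = Ch$, and adding the two summands gives \eqref{Eq:DeltaVBoundh}. I expect the main obstacle to be exactly this sharp column estimate $\|\MAsym(\mathbf{0})^{-1}\MUtil^{(i)}\| = O(h^{-1})$: it is where the square-integrability of the two-dimensional Green's function, equivalently a discrete Green's function or maximum-principle argument, is indispensable, and where the crude operator-norm bound fails by one power of $h$. A secondary point worth checking is that the logarithmic blow-up of the diagonal Green's function values does not destroy the boundedness of the inverse factor; as noted above, it is harmless because it is damped by the $O(h^2)$ size of $\Xi^{(i)}$.
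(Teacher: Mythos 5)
Your proposal is correct and shares the overall skeleton of the paper's proof: both start from the three-factor bound of the preceding lemma, bound the two structurally identical summands separately by $C_i h$ and $C_{i+1}h$, and obtain the factor $2$ via $C=\max(C_i,C_{i+1})$; both also identify the column estimate $\|\MAsym(\mathbf{0})^{-1}\Ve_j\| = O(1/h)$ as the crux and handle the middle factor by a Neumann-series argument. Where you genuinely diverge is in \emph{how} that crucial estimate is proved. The paper stays entirely discrete: it writes $\MAsym(\mathbf{0})=\EigvecsUnit\pmb{\Lambda}\EigvecsUnit^T$ with the explicit sine eigenvectors (scaling $2h$) and eigenvalues $4[\sin^2(k_x\pi h/2)+\sin^2(k_y\pi h/2)]$ of the 2D discrete Laplacian, bounds the resulting double sum using $\beta^2\leq\frac{\pi^2}{4}\sin^2\beta$ on $[0,\pi/2]$, and reduces everything to the convergent sum $\sum_{k_x,k_y}(k_x^2+k_y^2)^{-2}=O(1)$, giving $\|\pmb{\Lambda}^{-1}\EigvecsUnit^T\Ve_j\|\leq c_1/h$ rigorously and self-containedly. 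You instead invoke the continuum picture: columns of $\MAsym(\mathbf{0})^{-1}$ as sampled Green's functions, square-integrability of the 2D logarithmic singularity, and a Riemann-sum identity. Your route is conceptually attractive and more portable (it does not require an explicit eigendecomposition, so it suggests how the result extends to variable coefficients or irregular domains, which the paper's Fourier argument does not), but as written it is a heuristic: the step ``$\MAsym(\mathbf{0})^{-1}\Ve_j$ is, to leading order, the sampled continuous Green's function'' needs a uniform discrete-Green's-function bound of the type $|(\MAsym(\mathbf{0})^{-1})_{mj}|\leq C(1+|\log(|x_m-x_j|+h)|)$, with error control near the singularity, before the Riemann-sum conclusion is legitimate; such bounds are standard in the finite-difference literature but are exactly the content being proved, so a complete write-up would either cite them or fall back on the paper's explicit spectral computation. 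Your handling of the middle factor via entrywise $O(\log(1/h))$ bounds (yielding $O(h^2\log(1/h))$) is also slightly different from, and compatible with, the paper's cruder but sufficient $O(h)$ bound obtained by reusing the $O(1/h)$ column estimate; and your observations that orthonormality of the columns of $\MUtil^{(i)}$ gives $\|\Xi^{(i)}\MYtil\|=\|\Xi^{(i)}\|=O(h^2)$, and that the naive spectral bound $\|\MAsym(\mathbf{0})^{-1}\|_2=O(h^{-2})$ loses exactly one power of $h$, are both accurate and match the paper's accounting.
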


\begin{proof} To bound $\displaystyle \frac{\| \Delta\MV \|}{\| \MV^{(0)} \|} $, we examine the right-hand side of (\ref{Eq:DeltaVboundprf}). We start with bounding
$\| \MAsym(\V0)^{-1} \MUtil^{(i)} \|$, where  $\MUtil^{(i)}$ is defined in (\ref{Eq:Utildef}). Since $k$ is a modest number, and we have
\begin{equation}
  \| \MAsym(\pmb{\pi}_i)^{-1} \MUtil^{(i)} \| \leq
  k_i \max_{e_j} \| \MAsym(\V0)^{-1} \Ve_j \| ,
\end{equation}
we focus on $||\MAsym(\V0)^{-1} \Ve_j||$. Since $\MAsym(\V0)$ is SPD, we express $\MAsym(\V0)$ in terms of its eigendecomposition,
\begin{equation}\label{Eq:EigdecompAtilda}
\MAsym(\V0)=\EigvecsUnit \pmb{\Lambda}\EigvecsUnit^T.
\end{equation}
In the following, we need the eigenvalues and eigenvectors of the 2D discretized Laplacian, see \cite{ZhilinFDbook2018}. We assume the mesh width $h$ is the same in the $x$ and $y$ directions. Let $k_x$ and $k_y$ be the wave numbers in the $x$ and $y$ directions. Labeling the eigenvectors by the wave number, the components of the normalized $(k_x, k_y)$ eigenvector are given by
 \begin{equation}\label{Eq:Lapl_eigvec_unitary}
\eigvecUnitD=\nu^{k_x k_y}\sin(i_x k_x \pi h) \sin(i_y k_y \pi h),\:\: i_x,\:i_y=1,\cdots,K-1,
\end{equation}
where the scaling factor $\nu^{k_x k_y} = 2h$, see \cite[pg. 400]{Gockenbach_PDEbook}, with the corresponding eigenvalues
\begin{equation}\label{Eq:Lapl_eigs}
\lambda^{k_x,k_y}=4\left[\sin^2\left(\frac{k_x \pi h}{2}\right)+\sin^2\left(\frac{k_y \pi h}{2}\right)  \right].
\end{equation}
Using the eigendecomposition of $\MAsym(\V0)$ and the fact that $\EigvecsUnit$ is orthogonal, we obtain
\begin{align}\label{Eq:Ainv_ej}
\| \MAsym(\V0)^{-1} \Ve_j \|=\|\EigvecsUnit \pmb{\Lambda}^{-1}\EigvecsUnit^T\Ve_j\|=\|\pmb{\Lambda}^{-1}\EigvecsUnit^T\Ve_j\|.
\end{align}
Then, using (\ref{Eq:Lapl_eigvec_unitary}), (\ref{Eq:Lapl_eigs}) and (\ref{Eq:Ainv_ej}) gives
\begin{align}\label{Eq:Vnorm_p1}
\displaystyle ||\pmb{\Lambda}^{-1}\EigvecsUnit^T\Ve_j ||^2 &= \sum\limits_{k_y=1}^{K-1}\sum\limits_{k_x=1}^{K-1} \left[\frac{\nu^{k_x,k_y} \sin(j k_x \pi h)\sin(j k_y \pi h)}{4\left[\sin^2\left(\frac{k_x \pi h}{2}\right)+\sin^2\left(\frac{k_y \pi h}{2}\right)   \right]}\right]^2 \nonumber \\
&= h^2 \sum\limits_{k_y=1}^{K-1}\sum\limits_{k_x=1}^{K-1} \left[\frac{ \sin(j k_x \pi h)\sin(j k_y \pi h)}{\sin^2\left(\frac{k_x \pi h}{2}\right)+\sin^2\left(\frac{k_y \pi h}{2}\right) }\right]^2 \nonumber\\
&\leq h^2 \sum\limits_{k_y=1}^{K-1}\sum\limits_{k_x=1}^{K-1} \left[\frac{\frac{\pi^2}{4}}{\frac{\pi^2}{4} \left(\sin^2\left(\frac{k_x \pi h}{2}\right)+\sin^2\left(\frac{k_y \pi h}{2}\right) \right) }\right]^2.
\end{align}
Since $\displaystyle \beta^2\leq \frac{\pi^2}{4} \sin^2(\beta)$ for $\displaystyle 0 \leq \beta  \leq \frac{\pi}{2}$, we get
\begin{align}\label{Eq:Vnorm_p2}
||\pmb{\Lambda}^{-1}\EigvecsUnit^T\Ve_j ||^2 &\leq h^2 \sum\limits_{k_y=1}^{K-1}\sum\limits_{k_x=1}^{K-1} \left[\frac{\frac{\pi^2}{4}}{\frac{\pi^2 h^2}{4} \left(k_x^2+k_y^2\right) }\right]^2 \nonumber\\
&= \frac{1}{h^2} \sum\limits_{k_y=1}^{K-1}\sum\limits_{k_x=1}^{K-1} \left[\frac{1}{ \left(k_x^2+k_y^2\right) }\right]^2.
\end{align}
Since the double sum term is $O(1)$, we obtain the bound
\begin{align}\label{Eq:DeltaVbound_part1}
\| \pmb{\Lambda}^{-1}\EigvecsUnit^T\Ve_j  \| \leq  c_1 \frac{1}{h},
\end{align}
for a modest constant $c_1$.
Since $\| \Xi^{(i)} \MYtil \MAsym(\V0)^{-1} \MUtil^{(i)} \|$ is $O(h)$, it is now straightforward to show that
\begin{equation}\label{Eq:DeltaVbound_part2}
\| (\MI_k+\Xi^{(i)} \MYtil \MAsym(\V0)^{-1} \MUtil^{(i)})^{-1}\| \leq  c_2,
\end{equation}
where $c_2$ is a modest constant.
Finally, we have
\begin{equation}\label{Eq:DeltaVbound_part3}
 \| \Xi^{(i)} \MYtil \| \leq c_3 h^2
\end{equation}
from the definition of $\Xi^{(i)}$ (\ref{Eq:DeltaA_UY}). Combining (\ref{Eq:DeltaVbound_part1}), (\ref{Eq:DeltaVbound_part2}), and (\ref{Eq:DeltaVbound_part3}), we obtain the bound
\begin{equation}
\frac{\|\MV^{(i)} - \MV^{(0)}\|}{\|\MV^{(0)}\|} \leq C_i h,
\end{equation}
where $C_i=c_1 c_2 c_3$. Similarly, we can bound
\begin{equation}
  \frac{\| \MV^{(i+1)} - \MV^{(0)}\|}{\| \MV^{(0)} \|} \leq C_{i+1} h,
\end{equation}
where $C_{i+1}$ is a constant. Hence, we obtain the desired bound
\begin{equation}
\displaystyle \frac{\| \Delta \MV \| }{ \| \MV^{(0)} \|}
  \leq
  \frac{\|\MV^{(i)} - \MV^{(0)}\|}{\|\MV^{(0)}\|} +
  \frac{\|\MV^{(i+1)}- \MV^{(0)}\|}{\|\MV^{(0)}\|} \leq (C_{i+1} + C_{i}) h
      \leq 2Ch ,
\end{equation}
with $C = \max(C_{i+1},C_{i})$.
\end{proof}
 \begin{figure}
	\centering
		\includegraphics[ width=0.4\textwidth]{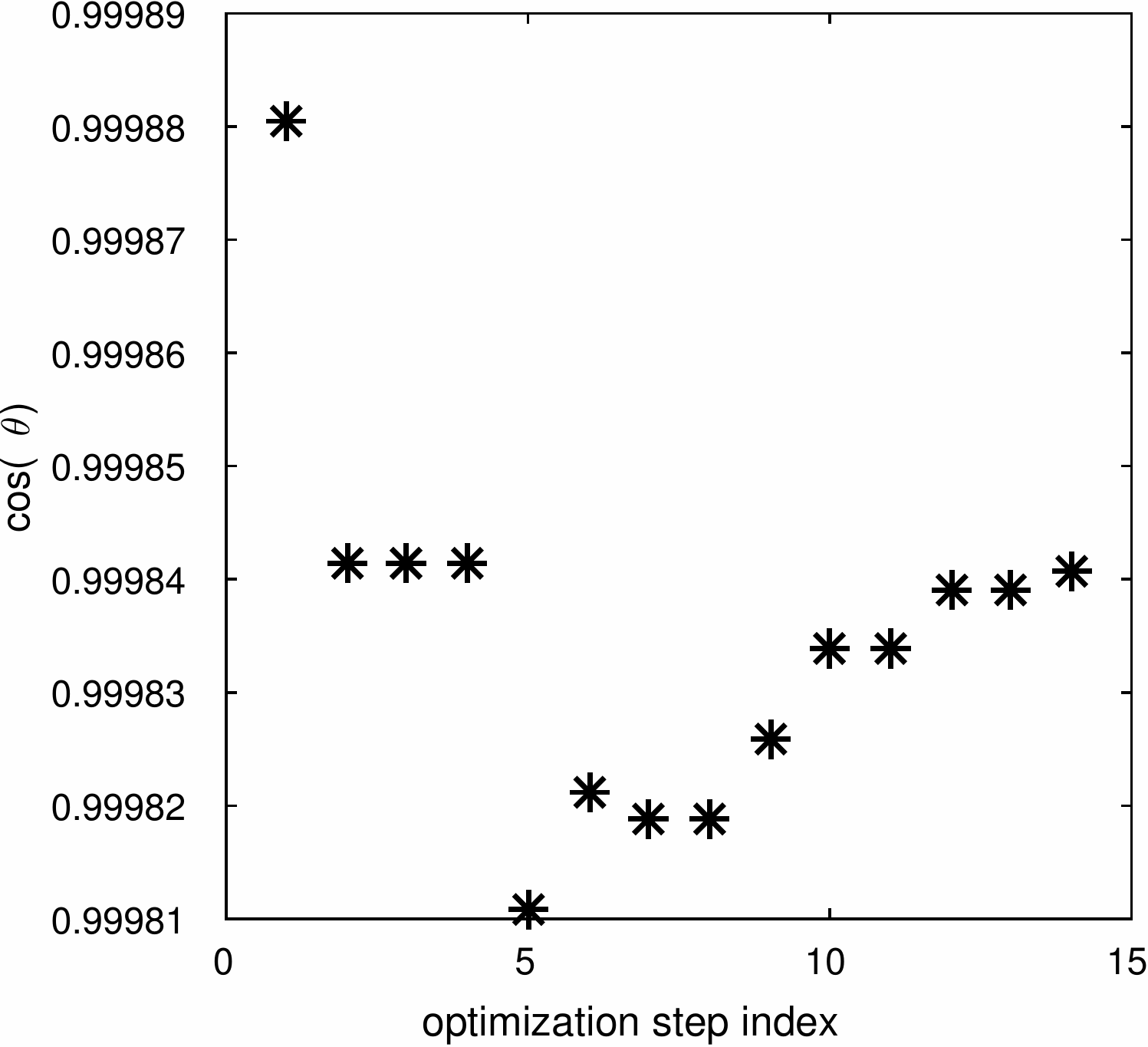}
\caption{Evolution of the subspace gap between the global basis, $\MVrand_r$, and new reduction spaces, $\MA(\Vp_s)^{-1} \MB$, over the course of the optimization. Example from the reconstruction of an anomaly on a $201\times 201$ mesh with 32 sources and 32 detectors. We use 25 basis functions and only the zero frequency.}
\label{Fig:CanAng_opt}
\end{figure}
We demonstrate this result in Figure~\ref{Fig:CanAng_opt} where we give the cosines of the canonical angles between the global basis, $\MVrand_r$, and new reduction spaces, $\MA(\Vp_s)^{-1} \MB$, over the course of the optimization. As can be seen, the angles remain small over the course of the optimization. Thus, even though we can only guarantee matching the cost function and the Jacobian at the sample points, the reduced parametric transfer function is expected to interpolate the full transfer function and the derivatives approximately, yet accurately enough, even for the other parameter points that the optimization algorithm visits during the inversion process.

\section{Numerical Results}\label{Sec:NumExp}
We present two proof-of-concept numerical experiments for 3D DOT inversion for absorption, using multiple frequencies. For each test case, we construct anomalies in the pixel basis, and  add a small normally distributed random heterogeneity to both the background and to the anomaly to make the medium inhomogeneous. This ensures a modest mismatch between the exact image and the representation used to reconstruct the image. We also add $\delta=0.1\%$ white noise to the simulated data in each experiment. We use PaLS \cite{Aghasi_etal11} and TREGS \cite{StuKil11c} to reconstruct the absorption images. The first ${n_k}$ iterations of the optimization using the exact objective function (FOM) give the parameter sample points.

To demonstrate the effectiveness of our proposed method, we use a subset of $\ell_s$ random simultaneous sources and $\ell_d$  random simultaneous detectors. As discussed in Section \ref{Sec:RandROM}, we use these randomized sources and detectors as sample vectors to efficiently approximate the global basis using Algorithm~\ref{Alg:ROMrandsrcdet}. We stop the optimization when the residual norm falls below $1.1$ times the noise level. For comparison, we run the FOM and the ROM using all sources and detectors as computed by Algorithm~\ref{Alg:ROMallsrcdet}. We use the same tolerance to remove small singular value components from candidate bases for both
Algorithms~\ref{Alg:ROMallsrcdet} and~\ref{Alg:ROMrandsrcdet}. 
For nonzero frequencies, following the common approach, after generating candidate bases in Algorithm~\ref{Alg:ROMallsrcdet}, we set the candidate bases $\MV \leftarrow [~\textsf{Re}(\MV)~~\textsf{Imag}(\MV)~]$ and $\MW \leftarrow [\textsf{Re}(\MW)~~\textsf{Imag}(\MW)]$ to  work in real arithmetic and retain the realness of the underlying dynamics. Similarly, for our approach, we set the candidate bases $\MVrand \leftarrow [~\textsf{Re}(\MVrand)~~\textsf{Imag}(\MVrand)~]$ and $\MWrand \leftarrow [\textsf{Re}(\MWrand)~~\textsf{Imag}(\MWrand)]$ in Algorithm~\ref{Alg:ROMrandsrcdet}. This also implies that the interpolation conditions (\ref{Eq:ROMhermite})
and (\ref{Eq: OptCond2}) hold also at the frequency point 
$-\omega_j$ for $j=1,\ldots,n_\omega$ (in addition to 
$\omega_j)$.

We present two 3D experiments using a $32\times 32\times 32$ mesh with 225 sources at the top and 225 detectors on the bottom. We use 27 CSRBFs to reconstruct both anomalies, leading to 135 parameters. The initial absorption image is given in Figure~\ref{Fig: Initial3D_RandROM} where 13 basis functions have a positive expansion coefficient (visible as high absorption regions) and 14 basis functions have a negative expansion coefficient (invisible). For our approach, we use 50 randomized sources and detectors to construct the ROM basis for each experiment, $\ell_s=\ell_d=50$. We vary the number of parameter sample points, $n_k$, and the number of frequencies, $n_{\o}$, in each experiment. 

\begin{figure}
	\centering
		\includegraphics[width=0.5\textwidth,]{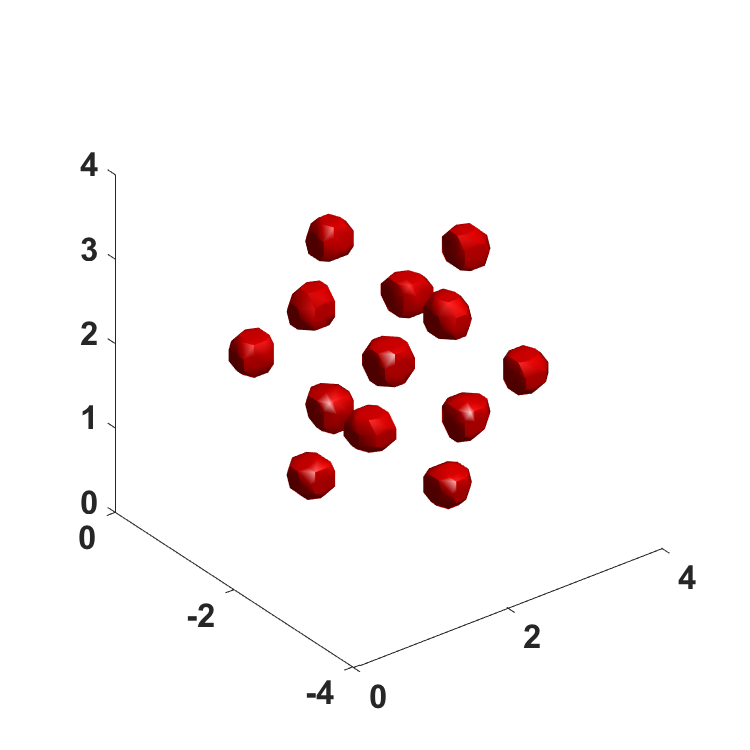}
	\caption{Initial configuration for the 3D experiments with 27 basis functions arranged in a $3\times 3 \times 3$ grid, where 13 basis functions have positive expansion factors (visible) and 14 basis functions have negative expansion factors (invisible).}
	\label{Fig: Initial3D_RandROM}
	\end{figure}
\textbf{Example 1.}  In this experiment, we use $n_k=4$ parameter sample points and $n_{\o}=3$ frequencies to construct the ROM basis. The true absorption image for Example 1 is given in Figure~\ref{Fig:3DExp_cup}a. Using the FOM, the optimization algorithm solves $18900$ linear systems of dimension $n = 32768$ to reconstruct the absorption image. The reconstruction result using the FOM is given in Figure~\ref{Fig:3DExp_cup}b. Constructing a ROM using all sources and detectors requires $5400$ large linear systems of dimension $n = 32768$. In addition, the optimization using the ROM requires $16875$ small linear systems of dimension $r = 1368$ to reconstruct the absorption image. The reconstruction result using the standard ROM with all sources and detectors is given in Figure~\ref{Fig:3DExp_cup}c. Constructing a ROM using our approach only requires $1200$ linear systems of dimension $n = 32768$. The optimization using the ROM with our approach requires $17550$ linear systems of dimension $r = 1294$ to reconstruct the absorption image. Our approach reduces the large solver cost by about \textit{a factor 16}, while obtaining similar quality reconstruction results; see Figure~\ref{Fig:3DExp_cup}d. In Table~\ref{Table:PDEsolves}, we give the number of linear systems to be solved for each method for comparison. In our experiments, the  reduced dimension, $r$, is larger
than one might usually expect in a reduced model. The need for this modest size $r$ stems from the fact that we have a high dimensional parameter space and the model to approximate has large input (source) and output (detector) dimensions. Thus, it is not surprising that
for a reduced model to have high-fidelity for these high-dimensional parameter, input, and output spaces, we need a modest size $r$. The reduced dimension $r$ could be cut in half if one chose not to retain the symmetry and   apply a two-sided reduction, i.e., $\MV_r \neq \MW_r$. However, we choose to preserve the underlying structure.
\begin{table}[]
\centering
\begin{tabular}{|c|c|c|c|c|c|}
\hline
    & FOM & \multicolumn{2}{c|}{\begin{tabular}[c]{@{}c@{}}ROM\\
    (all srcs/det)\end{tabular}} & \multicolumn{2}{c|}{\begin{tabular}[c]{@{}c@{}}ROM \\
    (rand. srcs/dets) \end{tabular}} \\
\hline
    & large & large & small & large & small \\
\hline
\begin{tabular}[c]{@{}c@{}}Exp. 1\\ ($n=32768$)\end{tabular} & $18900$ & $5400$          & \begin{tabular}[c]{@{}c@{}}16875\\ ($r=1368$)\end{tabular}          & $1200$            & \begin{tabular}[c]{@{}c@{}}17550\\ ($r=1294$)\end{tabular}          \\ \hline
\begin{tabular}[c]{@{}c@{}}Exp. 2\\ ($n=32768$)\end{tabular} & $47700$ & $5400$          & \begin{tabular}[c]{@{}c@{}}21600 \\ ($r=1228$)\end{tabular}         & $1200$            & \begin{tabular}[c]{@{}c@{}}27000\\ ($r=1184$)\end{tabular}           \\ \hline
\end{tabular} 
\vspace{1ex}
\caption{Number of large, $O(n)$, and small, $O(r)$, linear system solves
for each method for the 3D experiments.}
\label{Table:PDEsolves}
\end{table}

 \begin{figure}
	\centering
	   \begin{subfigure}{0.45\textwidth}
		\centering
		\includegraphics[ width=0.95\textwidth]{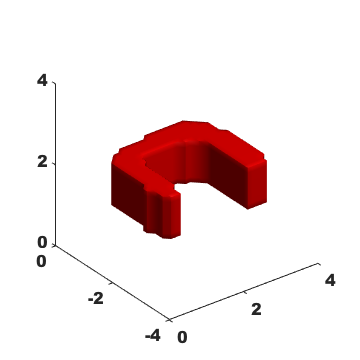}
		\subcaption{True shape of the anomaly.}
	\end{subfigure}\:\:\:
		\centering
	   \begin{subfigure}{0.45\textwidth}
		\centering
		\includegraphics[width=0.95\textwidth]{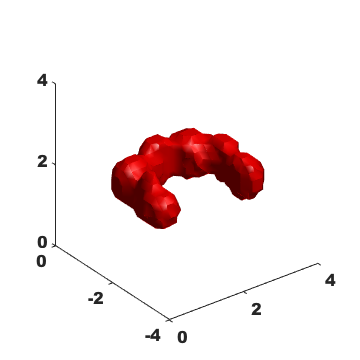}
		\subcaption{Reconstruction using the FOM.}
	\end{subfigure}
	\centering
		  \begin{subfigure}{0.45\textwidth}
		\centering
		\includegraphics[width=0.95\textwidth]{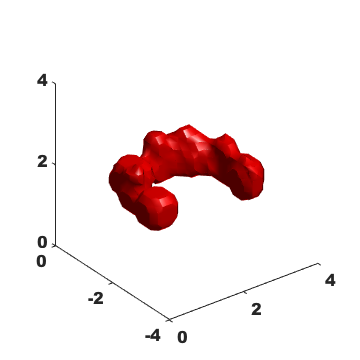}
		\subcaption{Reconstruction using the standard ROM with all sources and detectors.}
	\end{subfigure}\:\:\:
	\centering
	   \begin{subfigure}{0.45\textwidth}
		\centering
		\includegraphics[width=0.95\textwidth]{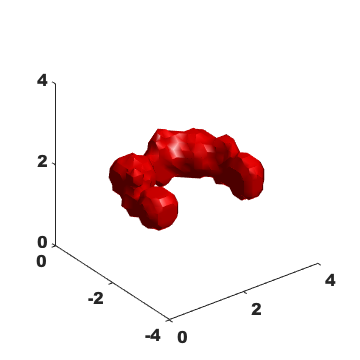}
		\subcaption{Reconstruction using the ROM with randomized sources and detectors, $\ell_s=50$.}
	\end{subfigure}
\caption{Results for Experiment 1. Reconstruction of a test anomaly on a $32\times 32 \times 32$ mesh with 225 sources and 225 detectors. We use 3 frequencies and 27 CSRBFs. To build the ROM, we use 4 parameter sample points.}
\label{Fig:3DExp_cup}
\end{figure}

\textbf{Example 2.} The setup is the same as in Example 1. However, for this experiment, we use $n_k=3$ parameter sample points and $n_{\o}=4$ frequencies to construct the ROM basis for a different anomaly, see Figure~\ref{Fig:3DExp}a. Using the FOM, the optimization algorithm solves $47700$ linear systems of dimension $n = 32768$ to reconstruct the absorption image. The reconstruction result using the FOM is given in Figure~\ref{Fig:3DExp}b. Constructing the ROM using all sources and detectors requires $5400$ linear systems of dimension $n = 32768$. The optimization using the ROM requires $21600$ linear systems of dimension $r = 1228$ to reconstruct the absorption image. The reconstruction result using the ROM constructed with all sources and detectors is given in Figure~\ref{Fig:3DExp}c. Constructing the ROM using our approach only requires $1200$ linear systems of dimension $n = 32768$. The optimization using the ROM with our approach requires $27000$ linear systems of dimension $r=1184$ to reconstruct the absorption image. Our approach reduces the large solver cost by about \textit{a factor 40}, while obtaining similar quality reconstruction results; see Figure~\ref{Fig:3DExp}d. The number of linear systems required for each method is given in Table~\ref{Table:PDEsolves}. For large problems with many sources and detectors and using multiple frequencies, we expect much larger gains.
\begin{figure}
	\centering
	   \begin{subfigure}{0.45\textwidth}
		\centering
		\includegraphics[ width=0.95\textwidth]{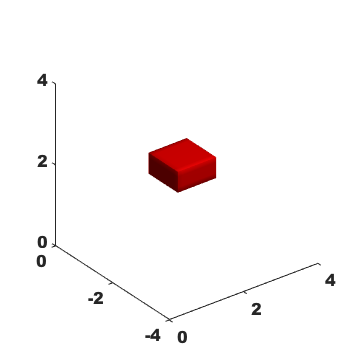}
		\subcaption{True shape of the anomaly.}
	\end{subfigure}\:\:\:
		\centering
	   \begin{subfigure}{0.45\textwidth}
		\centering
		\includegraphics[width=0.95\textwidth]{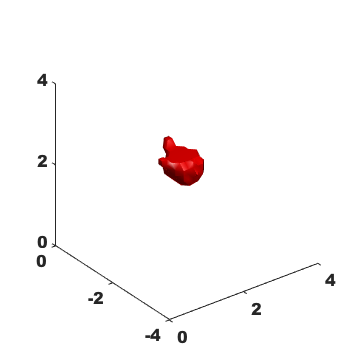}
		\subcaption{Reconstruction using the FOM.}
	\end{subfigure}
	\centering
		  \begin{subfigure}{0.45\textwidth}
		\centering
		\includegraphics[width=0.95\textwidth]{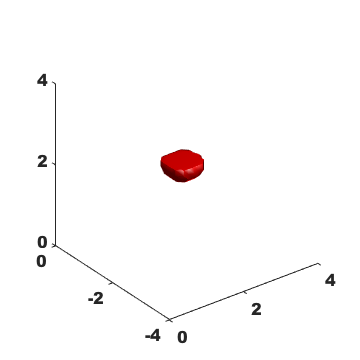}
		\subcaption{Reconstruction using the standard ROM with all sources and detectors.}
	\end{subfigure}\:\:\:
	\centering
	   \begin{subfigure}{0.45\textwidth}
		\centering
		\includegraphics[width=0.95\textwidth]{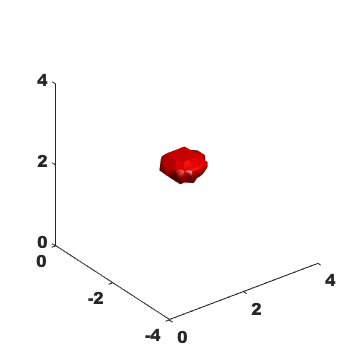}
		\subcaption{Reconstruction using the ROM with randomized sources and detectors, $\ell_s=50$.}
	\end{subfigure}
\caption{Results for Experiment 2. Reconstruction of a test anomaly on a $32\times 32 \times 32$ mesh with 225 sources and 225 detectors. We use 4 frequencies and 27 CSRBFs. To build the ROM, we use 3 parameter sample points.}
\label{Fig:3DExp}
\end{figure}

\section{Conclusions and Future Work}\label{sec:conc}

While model reduction provides a way to drastically reduce the cost of
many expensive linear solves in nonlinear parametric inversion \cite{StuGuKilChatBeatOCon}, the standard method to compute a global basis for our problem still poses a high computational cost. This cost has two components.
First, we solve many large linear systems to compute a candidate basis. Second, since we typically need a relatively high order ROM for robustness, the rank revealing factorization of the resulting large candidate basis
can be fairly expensive.

Our numerical experiments show that we can get
similar quality results with candidate bases with
substantially fewer columns.
Our theoretical analysis, which extends
earlier work in this area, gives an analytical justification for these numerical observations in the setting of interpolatory model reduction for the DOT problem.

We show that using randomized sources and detectors to build the global basis can substantially reduce the cost. Our experiments show that even for a small 3D problem, the number of large linear solves can be reduced substantially, while obtaining similar quality reconstructions. For larger problems with many more sources and detectors and multiple frequencies, we expect much larger gains.

This paper points to several interesting theoretical questions to explore further.
First, would computing optimal tangential interpolation directions lead to essentially better reduced order models than using randomization, with lower order reduced models?
Second, while we show theoretically that the angles between the spaces spanned by
groups of candidate basis vectors for distinct interpolation points should be small
for small anomalies, we observe that this is also the case for sizeable anomalies.
Hence, we plan to extend our analysis to more general cases.
Third, we would like to
derive the necessary theory to provide good estimates for the numbers of random vectors (simultaneous sources and detectors) to use.
 We plan to address these issues in future
papers.

\section*{Acknowledements} 
The authors thank Misha E. Kilmer for several insightful discussions and the use of the PaLS code \cite{Aghasi_etal11}.

This material is based upon work supported by the National Science
Foundation under Grant No. DMS-1720305. Part of this material is  based upon work supported by the National Science Foundation under Grant No. DMS-1439786 and by the Simons Foundation Grant No. 507536 while Gugercin was in residence at the Institute for Computational and Experimental Research in Mathematics in Providence, RI, during the "Model and dimension reduction in uncertain and dynamic systems" program.

\bibliographystyle{abbrv}
\bibliography{ms}

\end{document}